\numberwithin{equation}{section}
\newcommand{\be}{\begin{eqnarray}}
\newcommand{\ee}{\end{eqnarray}}
\newcommand{\ce}{\begin{eqnarray*}}
\newcommand{\de}{\end{eqnarray*}}
\newtheorem{theorem}{Theorem}[section]
\newtheorem{lemma}[theorem]{Lemma}
\newtheorem{proposition}[theorem]{Proposition}
\newtheorem{corollary}[theorem]{Corollary}
\theoremstyle{remark}
\newtheorem{assumption}[theorem]{Assumption}
\newtheorem{example}[theorem]{Example}
\newtheorem{remark}[theorem]{Remark}
\newtheorem{definition}[theorem]{Definition}
\crefname{eqn}{Equation}{Equations}
\crefname{assumption}{Assumption}{Assumptions}
\crefname{innercustomthm}{Condition}{Conditions}
\def\eps{\varepsilon}
\def\<{{\langle}}
\def\>{{\rangle}}
\def\({{\Big(}}
\def\){{\Big)}}
\def\bx{{\mathbf{x}}}
\def\={&\!\!=\!\!&}
\def\bt{\begin{theorem}}
\def\et{\end{theorem}}
\def\bl{\begin{lemma}}
\def\el{\end{lemma}}
\def\br{\begin{remark}}
\def\er{\end{remark}}
\def\bd{\begin{definition}}
\def\ed{\end{definition}}
\def\bp{\begin{proposition}}
\def\ep{\end{proposition}}
\def\bc{\begin{corollary}}
\def\ec{\end{corollary}}
\def\bx{\begin{example}}
\def\ex{\end{example}}
\def\cF{{\mathcal F}}
\def\cG{{\mathcal G}}
\def\cL{{\mathcal L}}
\def\cP{{\mathcal P}}
\def\mE{{\mathbb E}}
\def\E{\mE}
\def\mN{{\mathbb N}}
\def\PP{{\mathbb P}}
\def\geq{\geqslant}
\def\leq{\leqslant}
\def\div{\mathord{{\rm div}}}
\newcommand{\dd}{\,\mathrm{d}}
\newcommand{\loc}{\mathrm{loc}}
\newcommand{\R}{{\mathbb R}}
\newcommand{\norm}[1]{{\left\vert\kern-0.25ex\left\vert\kern-0.25ex\left\vert #1
    \right\vert\kern-0.25ex\right\vert\kern-0.25ex\right\vert}}
\begin{document}
	\title{A note on weak existence for singular SDEs}
	\date{\today}
	\author{Lucio Galeati}
\address{Lucio Galeati, 
EPFL, B\^atiment MA, 1015 Lausanne, Switzerland\newline
\indent Email:  lucio.galeati@epfl.ch
}

	\begin{abstract}
	Recently Krylov \cite{krylov1} established weak existence of solutions to SDEs for integrable drifts in mixed Lebesgue spaces, whose exponents satisfy the condition $1/q+d/p\leq 1$, thus going below the celebrated Ladyzhenskaya-Prodi-Serrin condition. We present here a variant of such result, whose proof relies on an alternative technique, based on a \textit{partial} Zvonkin transform; this allows for drifts with growth at infinity and/or in uniformly local Lebesgue spaces.\\[1ex]
		\noindent {{\bf AMS 2020 Mathematics Subject Classification:} 60H10,  60H50.}
	\\[1ex]
		\noindent{{\bf Keywords:} Singular SDEs; weak existence; partial Zvonkin transform.} 
	\end{abstract}

\maketitle	

\section{Introduction}
Consider a multidimensional SDE on $\R^d$, $d\geq 2$, of the form
\begin{equation}\label{eq:intro-sde}
	\dd X_t = b_t(X_t) \dd t + \dd W_t.
\end{equation}
where $W$ is a standard Brownian motion.
It is by now well established that, even when the drift $b$ is singular, the SDE \eqref{eq:intro-sde} may still admits strong, pathwise unique solutions, in a regularization by noise fashion. In particular, a major focus in the literature is devoted to integrable drifts satisfying the Ladyzhenskaya-Prodi-Serrin condition, namely\footnote{See the end of the introduction for the definition of $L^q_t L^p_x$ and all other relevant function spaces.}
\begin{equation}\label{eq:LPS}
	b\in L^q_t L^p_x, \quad \frac{2}{q}+\frac{d}{p}\leq 1. \tag{LPS}
\end{equation}
The importance of \eqref{eq:LPS} comes from its connection to advection-diffusion equations, in particular the solvability of $3$D Navier--Stokes equations, as well as the fact that it arises naturally from a scaling argument (see e.g. \cite{BFGM2019}), hence why it is regarded as a \textit{critical class} of drifts for the solvability of \eqref{eq:intro-sde}.
The celebrated work of Krylov and R\"ockner \cite{krylov2005} came close to \eqref{eq:LPS}, up to only allowing the strict inequality and some additional technical constraints, which were later removed by X. Zhang in \cite{Zhang2011}; but it took several additional years and efforts to understand the critical case, see \cite{BFGM2019,krylov2021strong,RocZha2} and the review \cite{kinzebulatov2023}.

However recently Krylov \cite{krylov1} pointed out, elaborating on a previous result of Gy\"ongy and Mart\'inez \cite{GyoMar2001}, that in order to attain weak existence of solutions to \eqref{eq:intro-sde} it suffices to consider mixed Lebesgue spaces\footnote{More precisely, it is required that $b\in L^q_t L^p_x$ if $p\geq q$ and $b\in L^p_x L^q_t$ otherwise.} with exponents $p,q\in [1,\infty]$ satisfying
\begin{equation}\label{eq:krylov-exponents}
	\frac{1}{q}+\frac{d}{p} \leq 1.
\end{equation}
He also showed that this condition is optimal, in the sense that for $(p,q)$ satisfying the opposite inequality one can find drifts for which weak existence fails. Finer properties of the Markov process $X$ constructed in this way have then been established in \cite{krylov2,krylov3,krylov4,krylov5}.

This note stems from an attempt to understand condition \eqref{eq:krylov-exponents} from a different perspective, introducing an heuristic which hopefully might be relevant in other settings.
In order to explain it, it is useful to momentarily enlarge the class of problems and consider \eqref{eq:intro-sde} driven by a fractional Brownian motion $W$ of Hurst parameter $H\in (0,1)$. In this case, running the same scaling argument as in \cite{BFGM2019}, it was predicted in \cite[Section 1.1]{GalGer2022} that drifts $b\in L^q_t C^\alpha_x$ should be critical under the condition
\begin{equation}\label{eq:fbm-coefficients}
	\alpha = 1-\frac{1}{H q'}, \quad \frac{1}{q'}=1-\frac{1}{q},
\end{equation}
although a complete rigorous proof of this claim is still missing.
The scaling procedure consists in ``zooming in'' to look at the dynamics at short times; by self-similarity of the driving noise, this is equivalent (in law) to considering the same dynamics on $[0,1]$ but with rescaled drift $b^\lambda(t,x)=\lambda^{1-H} b(\lambda t, \lambda^H x)$. The critical class of drifts is then identified as the one invariant under this transformation, in the sense that $b$ and $b^\lambda$ have (roughly) the same norm; heuristically, the noise and the nonlinearity have ``the same strength'' and none is overtaking the other at small times.
In this sense, the scaling itself doesn't directly predict any wellposedness or illposedness results, rather it informs us on which component is locally driving the dynamics; if this is the drift $b$ (namely we are in the supercritical regime $\alpha<1-1/(Hq')$), then we might expect the dynamics to display similar phenomena as in the absence of noise. This a priori doesn't exclude it from being well-defined, or existence of solutions to hold, which still depends on the drift $b$ in consideration; but it tells us that the noise $W$ shouldn't be too much of help.

A different way to look at \eqref{eq:fbm-coefficients} is to regard it as an \textit{interpolation class} between two extrema, given respectively by $b\in L^1_t C^1_x$ ($q=1$) and $b\in L^\infty_t C^{1-1/H}_x$ ($q=\infty$)\footnote{Besov-H\"older spaces $C^\alpha_x$ are just one option and one might instead consider Lebesgue spaces with the same scaling behaviour. For instance, for $H=1/2$, $C^{-1}_x$ scales like $L^d_x$, which recovers the critical scale $b\in L^\infty_t L^d_x$. In this direction, let us mention \cite{butkovsky2023stochastic} for weak existence results for SDEs driven by fractional Brownian motion with (autonomous) drift in subcritical Lebesgue scales $L^p_x$.}.
Note that the endpoint $L^1_t C^1_x$ is the standard Cauchy-Lipschitz class, for which wellposedness of \eqref{eq:intro-sde} holds regardless of the choice of the driving noise $W$; instead the second endpoint $L^\infty_t C^{1-1/H}_x$, with a uniform-in-time regularity condition, is the one dictated by the scaling of the noise.

In this sense, if one is just interested in \textit{weak existence} of solutions, rather than their wellposedness, it makes sense to modify the first endpoint with another classical ODE requirement, $b\in L^1_t C^0_x$, under which solutions can be constructed by Peano's theorem (again, this result being valid for any choice of $W$). Interpolating between these two endpoints, one obtains a new class of drifts, for which there is some hope to retain weak existence results.
Observe that the range of exponents \eqref{eq:krylov-exponents} can be recovered by the same heuristics, interpolating between $L^1_t L^\infty_x$ for $q=1$ (``almost Peano'') and the time-homogeneous LPS class $L^\infty_t L^d_x$.
An analogue of \eqref{eq:krylov-exponents} in the fractional Brownian case is currently being obtained in \cite{ButGal}.

The aim of this note is to show that, in the case of Brownian SDEs, this interpolation heuristic can be made rigorous, by employing a \textit{partial Zvonkin transform}.
More precisely, given a drift which decomposes as $b=b^1+b^2$, where $b^1$ is a ``good drift'' for weak existence results, while $b^2$ is a more singular component, we can find a transformation $\Phi$ of the state space (obtained by solving a parabolic PDE) which removes the latter.
One then ends up with a new SDE for $Y=\Phi(X)$, driven by a drift $\tilde{b}$ which retains the properties of $b^1$ (e.g. local boundedness and linear growth); this allows to develop a priori estimates, which ultimately lead to existence by a compactness argument.

Although Zvonkin transform is by now a well-established tool for solving singular SDEs (see e.g. \cite{XXZZ2020}), it is usually performed at the level of the whole drift $b$, without isolating its most singular part. In this direction, the only precursors in the literature we are aware of are \cite{XieZhang} (where $b^1$ instead plays the role of a \textit{coercive} component) and partially \cite{ZhangYuan2021}.

For the sake of simplicity, so far we considered SDEs with additive noise, but our result allows for the presence of a multiplicative diffusion $\sigma$, satisfying the conditions outlined below. In the next statement, $\tilde{L}^p_x$ denote uniformly local Lebesgue spaces, see the notation section.

\begin{assumption}\label{ass:diffusion}
The drift $b:[0,T]\times \R^d\to \R^d$ is of the form $b=b^1+b^2$, where
\begin{equation}\label{eq:ass-drift}
	\frac{b^1}{1+|x|}\in L^{1+\eps}_t L^\infty_x, \quad b^2\in L^\infty_t \tilde{L}^{d+\eps}_x \quad \text{for some } \eps\in (0,1).
\end{equation}
The diffusion matrix $\sigma:[0,T]\times \R^d\to \R^{d\times d}$ is uniformly continuous in space, bounded and nondegenerate. Namely, there exist a constant $K>0$ such that
\begin{equation}\label{eq:ass-diffusion-1}
	K^{-1} |\xi|^2 \leq |\sigma^\ast (t,x) \xi|^2 \leq K|\xi|^2 \quad
	\forall\,\xi\in\R^d,\,(t,x)\in [0,T] \times \R^d.
\end{equation}
and a modulus of continuity $\omega_\sigma$ such that
\begin{equation}\label{eq:ass-diffusion-2}
	|\sigma(t,x)-\sigma(t,y)|\leq \omega_\sigma(|x-y|) \quad \forall (t,x,y)\in [0,T]\times \R^{2d}.
\end{equation}
\end{assumption}

To state our main result, we adopt the following solution concept for SDEs; $\cP(\R^d)$ denotes the set of probability measures on $\R^d$.

\begin{definition}\label{defn:weak-solution}
Let $b:[0,T]\times \R^d\to\R^d$ and $\sigma:[0,T]\times \R^d\to \R^{d\times d}$ be measurable functions, $\mu_0\in \cP(\R^d)$.
A \emph{weak solution} to the SDE
\begin{equation}\label{eq:intro-sde-detailed}
	\dd X_t = b_t(X_t) \dd t + \sigma_t(X_t) \dd W_t
\end{equation}
with initial law $\mu_0$ is a tuple $(\Omega,\cF, \{\cF_t\}_{t\geq 0}, \PP; X,W)$ given by a filtered probability space, a $\cF_t$-adapted process $X$ and a $\cF_t$-Brownian motion $W$ such that
\begin{align*}
	\int_0^T |b_s(X_s)| \dd s + \int_0^T |\sigma_s(X_s)|^2 \dd s <\infty \quad \PP\text{-a.s.},
\end{align*}
$X_0$ is distributed as $\mu_0$ and $\PP$-a.s. it holds
\begin{align*}
	X_t =X_0 + \int_0^t b_s(X_s) \dd s + \int_0^t \sigma_s(X_s) \dd W_s \quad \forall\, t\in [0,T].
\end{align*} 
\end{definition}

Let us set $\cP_1(\R^d):=\{\nu\in \cP(\R^d): \int_{\R^d} |x| \nu(\!\dd x) <\infty \}$.

\begin{theorem}\label{thm:main-theorem}
Let $(b,\sigma)$ satisfy Assumption \ref{ass:diffusion}; then for any initial distribution $\mu_0\in \mathcal{P}_1(\R^d)$, there exists a weak solution $X$ to the SDE \eqref{eq:intro-sde-detailed}, with initial law $\mu_0$, in the sense of Definition \ref{defn:weak-solution}.
\end{theorem}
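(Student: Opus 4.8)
The plan is to carry out the partial Zvonkin transform described in the introduction and then a compactness argument. Set $a:=\sigma\sigma^\ast$. The first step is to solve, for a large parameter $\lambda>0$, the backward parabolic system on $[0,T]\times\R^d$
\[
\partial_t u+\tfrac12\tr\big(a\,\nabla^2u\big)+(b^2\cdot\nabla)u-\lambda u=-b^2,\qquad u_T=0,
\]
for the $\R^d$-valued unknown $u$ (componentwise). Using standard parabolic estimates for operators with bounded, uniformly continuous, nondegenerate leading part and first-order coefficient $b^2\in L^\infty_t\tilde L^{d+\eps}_x$ --- for which $\tfrac d{d+\eps}<1$, comfortably subcritical --- one gets a unique bounded solution with $u,\nabla u\in C_b([0,T]\times\R^d)$, $\nabla^2u\in L^\infty_t\tilde L^{d+\eps}_x$ and $\|u\|_\infty+\|\nabla u\|_\infty\le\eps_0(\lambda)\to0$ as $\lambda\to\infty$. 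Fixing $\lambda$ so that $\|\nabla u\|_\infty\le1/2$ makes $\Phi_t:=\id+u_t$ a bi-Lipschitz homeomorphism of $\R^d$ for each $t$, with $\|\Phi_t-\id\|_\infty\le1/2$ and inverse $\Psi_t$ satisfying the same bounds.

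Next I would regularise by spatial mollification, $b^i_n:=b^i*\rho_{1/n}$ $(i=1,2)$ and $\sigma_n:=\sigma*\rho_{1/n}$: the bounds of Assumption~\ref{ass:diffusion} persist uniformly in $n$ (in particular $\sigma_n\to\sigma$ uniformly, by uniform continuity of $\sigma$), and in addition $b^2_n$ becomes bounded, so the regularised SDE admits a weak solution $X^n$ with $X^n_0\sim\mu_0$. Solving the system above with $(a,b^2)$ replaced by $(\sigma_n\sigma_n^\ast,b^2_n)$ gives $u^n\in C^{1,2}_b$, $\Phi^n_t:=\id+u^n_t$, obeying the same bounds uniformly in $n$, with $u^n\to u$ and $\nabla u^n\to\nabla u$ locally uniformly. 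Itô's formula applied to $Y^n_t:=\Phi^n_t(X^n_t)$, using the PDE for $u^n$ to cancel all $b^2_n$-contributions, yields
\[
\dd Y^n_t=\tilde b^n_t(X^n_t)\,\dd t+\tilde\sigma^n_t(X^n_t)\,\dd W_t,\qquad \tilde b^n_t=\lambda u^n_t+(I+\nabla u^n_t)\,b^1_n(t,\cdot),\quad \tilde\sigma^n_t=(I+\nabla u^n_t)\,\sigma_n(t,\cdot),
\]
where $|\tilde b^n_t(x)|\le\lambda\|u^n\|_\infty+\tfrac32g_n(t)(1+|x|)$ with $g_n(t):=\|b^1_n(t,\cdot)/(1+|\cdot|)\|_{L^\infty_x}$ bounded in $L^{1+\eps}_t$ uniformly in $n$, and $\tilde\sigma^n$ bounded, uniformly nondegenerate, uniformly continuous.

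This is where the transform pays off: the drift of $Y^n$ has linear growth and no singular part, so Itô's formula, the Burkholder--Davis--Gundy inequality and a Grönwall argument (the time-integrability $g_n\in L^{1+\eps}_t$ entering through Hölder's inequality) give $\E\sup_{t\le T}|Y^n_t|\le C$ uniformly in $n$, hence $\E\sup_{t\le T}|X^n_t|\le C$ since $|X^n_t-Y^n_t|\le1/2$. Estimating increments over stopping times in the same way verifies Aldous' criterion, so $\{\Law(X^n,W^n)\}_n$ is tight in $C([0,T];\R^d)^2$; along a subsequence, Prokhorov's theorem and the Skorokhod representation theorem then give copies $(\bar X^n,\bar W^n)\to(\bar X,\bar W)$ a.s.\ in $C([0,T];\R^d)^2$, with $\bar W$ a Brownian motion for the limiting filtration.

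It then remains to pass to the limit in $\bar X^n_t=\bar X^n_0+\int_0^t(b^1_n+b^2_n)(s,\bar X^n_s)\,\dd s+\int_0^t\sigma_n(s,\bar X^n_s)\,\dd\bar W^n_s$. The stochastic integral converges by the usual stability estimate; the genuinely delicate term --- and the step I expect to be the main obstacle --- is the drift, because $b^2$ is singular and $b^1$ is merely $L^\infty$ in space, hence possibly discontinuous, while both are composed with the only a.s.-convergent $\bar X^n$. I would resolve this by the occupation-measure method: setting $\tau^n_R:=\inf\{t:|\bar X^n_t|\ge R\}$ one has $\PP(\tau^n_R<T)\le C/R$ uniformly in $n$ by the moment bound, and, because up to $\tau^n_R$ the drift of $\bar X^n$ splits into a part bounded in $x$ by an $L^{1+\eps}_t$ function of time plus the piece $b^2_n$, bounded in $L^\infty_t\tilde L^{d+\eps}_x$ uniformly in $n$, one obtains a uniform Krylov estimate $\E\int_0^{T\wedge\tau^n_R}|f(s,\bar X^n_s)|\,\dd s\le N_R\|f\|_{L^q_tL^p_x([0,T]\times B_R)}$ for suitable finite exponents with $\tfrac dp+\tfrac2q<2$. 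Combined with $b^i_n\to b^i$ in $L^q_tL^p_x(B_R)$ and a three-term mollification split (the Krylov bound transferring to $\bar X$ by lower semicontinuity of occupation measures), this gives $\int_0^{\cdot\wedge\tau^n_R}b^i_n(s,\bar X^n_s)\,\dd s\to\int_0^{\cdot\wedge\tau_R}b^i(s,\bar X_s)\,\dd s$ in $L^1(\Omega;C([0,T]))$; letting $R\to\infty$ and using $\int_0^T|b^i(s,\bar X_s)|\,\dd s<\infty$ a.s.\ identifies $(\bar X,\bar W)$ as a weak solution with initial law $\mu_0$. (Alternatively, one may pass to the limit in the transformed equation for $\bar Y=\Phi(\bar X)$, whose drift only sees $b^1$, and then recover the original SDE via an Itô--Krylov formula for $\Psi_t(\bar Y_t)$, using $\nabla^2u\in L^\infty_t\tilde L^{d+\eps}_x$.)
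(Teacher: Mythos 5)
Your proposal follows the same architecture as the paper's proof: partial Zvonkin transform with the PDE $\partial_t u+\tfrac12 a:D^2 u+b^2\cdot\nabla u-\lambda u=-b^2$ removing only the singular piece $b^2$, bi-Lipschitz estimates on $\Phi_t=\id+u_t$, mollified approximations, moment and H\"older bounds on $Y^n,X^n$ via Gr\"onwall, tightness, Prokhorov plus Skorokhod, and a three-term split with a spatial cutoff $\psi_R$ when passing to the limit in the drift. The stochastic integral step and the pathwise a priori estimates match the paper essentially verbatim (the paper argues pathwise H\"older bounds + Ascoli--Arzel\`a instead of Aldous' criterion, but that is cosmetic).

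The place where your argument has a genuine gap is the ``uniform Krylov estimate'' $\E\int_0^{T\wedge\tau^n_R}|f(s,\bar X^n_s)|\,\dd s\le N_R\|f\|_{L^q_tL^p_x}$ with $\tfrac dp+\tfrac 2q<2$, which you invoke without derivation. This cannot be taken off the shelf here. Even after the stopping-time truncation, the drift component $b^1\psi_R$ lies only in $L^{1+\eps}_t L^\infty_x$ with $\eps<1$, so it satisfies $\tfrac2{1+\eps}+0>1$ and is \emph{supercritical} LPS; the standard Krylov--R\"ockner machinery, which produces exactly the exponent range $\tfrac dp+\tfrac 2q<2$, requires the drift to be sub-LPS and hence does not apply. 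If one instead tries to solve a backward PDE with the whole drift $b^1\psi_R+b^2$ in the first-order coefficient, the low time-integrability of $b^1\psi_R$ ruins the $C^1_x$-estimate needed for the It\^o step. The paper's Lemma~\ref{lem:apriori-density} gets around this precisely by \emph{not} putting $b^1$ into the PDE: it decomposes the test function $f=f^\le+f^>$ via Lemma~\ref{lem:decomposition-lemma}, treats $f^\le\in L^{1+\eps}_t L^\infty_x$ trivially, solves the Zvonkin-type PDE with $g=b^2$ and source $f^>\in L^\infty_t\tilde L^{d+\eps}_x$, and controls the leftover term $b^1\cdot\nabla u$ through the linear growth of $b^1$ together with the moment bound from Lemma~\ref{lem:moments-X}. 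The resulting occupation-measure estimate holds for $\tfrac1q+\tfrac dp<1$ (strictly smaller than $\tfrac dp+\tfrac 2q<2$), which is nonetheless enough to close the limit, since $b^1\psi_R\in L^{1+\eps}_t L^p_x$ for all finite $p$ and $b^2\psi_R\in L^q_t L^{d+\eps}_x$ for all $q$. You should therefore either derive your Krylov bound by re-running the partial Zvonkin idea (solve the PDE only with $b^2$ in the drift) together with the decomposition of the test function, as in the paper, or else weaken the claimed exponent range to $\tfrac1q+\tfrac dp<1$ and supply this argument; as stated, the crucial estimate is asserted rather than proved.
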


Here are two relevant consequences of Theorem \ref{thm:main-theorem}.

\begin{corollary}\label{cor:main-cor-1}
Let $b\in L^q_t \tilde{L}^p_x$ for parameters $(p,q)\in [1,\infty]$ satisfying
\begin{equation}\label{eq:exponents-weaker}
	\frac{1}{q}+\frac{d}{p}<1
\end{equation}
Then $b$ admits a decomposition satisfying \eqref{eq:ass-drift}, so that Theorem \ref{thm:main-theorem} applies.
\end{corollary}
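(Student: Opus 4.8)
The plan is to split $b$ into the part where it is ``large'' and the part where it is ``small but possibly very singular'', and then verify that each piece lands in the corresponding space of Assumption \ref{ass:diffusion}. First I would fix a threshold $N>0$ (to be chosen later) and set
\begin{align*}
b^2 := b\,\mathbf{1}_{\{|b|\le N\}}, \qquad b^1 := b\,\mathbf{1}_{\{|b|> N\}}.
\end{align*}
For $b^2$, boundedness by $N$ gives $b^2\in L^\infty_{t,x}$, which in particular sits inside $L^\infty_t\tilde{L}^{d+\eps}_x$ for any $\eps$ (a pointwise bound controls every uniformly local Lebesgue norm on a fixed reference cube). So the $b^2$-requirement in \eqref{eq:ass-drift} is immediate. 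The work is entirely in showing that the large part $b^1$ satisfies $\tfrac{b^1}{1+|x|}\in L^{1+\eps}_t L^\infty_x$; note $b^1$ has no smallness or boundedness built in, only an integrability gain from the Chebyshev-type cutoff.

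The key step is the following observation. On the set $\{|b_t(x)|>N\}$ we may trade one power of $|b_t(x)|$ for a constant: $1\le |b_t(x)|/N$. Hence, for exponents $r\ge 1$ to be optimized,
\begin{align*}
\Big\|\tfrac{b^1_t}{1+|x|}\Big\|_{L^\infty_x}
\;\le\; \|b^1_t\|_{L^\infty_x\cap\,\cdots}
\end{align*}
— but of course $b^1_t$ need not be in $L^\infty_x$, so instead of an $L^\infty$ bound one should use that $\tfrac{1}{1+|x|}$ tames the spatial growth and that raising $|b_t|$ to a high power makes it integrable. Concretely, since $\tfrac{1}{1+|x|}\in L^{d/(d-1)+}_x$ (indeed $\int_{\R^d}(1+|x|)^{-s}\,\dd x<\infty$ iff $s>d$, so $(1+|x|)^{-1}\in L^{s}_x$ for every $s>d$), and on $\{|b^1_t|>N\}$ we have $|b^1_t|\le N^{1-p}|b^1_t|^p$ pointwise, one gets, by Hölder in space with exponents $\tfrac{p}{1}$ does not quite match — the cleaner route is: for a.e.\ fixed $t$, bound the sup by an $L^p_x$ quantity only if $b_t\in L^p_x$, which holds precisely when $p>q$; in the opposite regime $p\le q$ one works with $\tilde L^p_x$ on unit cubes and uses the embedding of $\tilde L^p_x$ into $\tilde L^1_x$ together with a Sobolev-type or elementary bound $\|f\|_{L^\infty(\text{cube})}$ is \emph{not} controlled, so instead one keeps $b^1$ as an $L^r_x$ object and notes that the assumption $\tfrac{1}{q}+\tfrac dp<1$ leaves room to choose $\eps$ with $1+\eps<q$ after the cutoff has upgraded $L^q_t$ integrability. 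The mechanism is: cutting at level $N$ converts $b\in L^q_tL^p_x$ into $b^1$ whose $L^p_x$-norm, \emph{to the power $p$}, is integrable in $t$ to a power strictly larger than $q/p$, because $\||b^1_t|^p\|_{L^1_x}\le N^{p-q}\||b_t|^q\|_{L^1_x}$ only uses $p\ge q$; in the regime $p<q$ one symmetrically exploits the mixed norm $L^p_x L^q_t$ (recall the footnote: the relevant space is $L^q_tL^p_x$ if $p\ge q$ and $L^p_xL^q_t$ otherwise) to gain time integrability pointwise in $x$.

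The main obstacle, and the step deserving the most care, is precisely this bookkeeping of exponents: showing that the strict inequality $\tfrac1q+\tfrac dp<1$ — as opposed to Krylov's borderline $\le 1$ — is exactly what provides the slack needed to land $b^1/(1+|x|)$ in $L^{1+\eps}_t L^\infty_x$ with $\eps\in(0,1)$, rather than merely in $L^1_t L^\infty_x$. I would proceed by writing $\tfrac{d}{p}=1-\tfrac1q-2\delta$ for some $\delta>0$, choosing the cutoff-exponent so that the spatial weight $(1+|x|)^{-1}\in L^{s}_x$ with $s$ slightly above $d$ absorbs the growth (this needs $d<p\cdot(\text{something})$, guaranteed by $\tfrac dp<1$), applying Hölder in $x$ to pass from the $L^p_x$ (or $\tilde L^p_x$) bound to an $L^\infty_x$ bound on $b^1_t/(1+|x|)$ at the cost of a power of $\|b^1_t\|_{L^p_x}$, and finally checking that this power is $t$-integrable to an exponent $>1$ using the gain $\|b^1_t\|_{L^p_x}^{\text{power}}\le N^{-\theta}\|b_t\|_{L^p_x}^{q}$ with $\theta>0$. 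Taking $N$ large at the end makes the $b^2$-contribution harmless (it is already bounded, so nothing is needed there) and confirms the decomposition satisfies \eqref{eq:ass-drift}; Theorem \ref{thm:main-theorem} then applies directly.
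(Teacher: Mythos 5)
Your decomposition assigns the two truncated pieces to the wrong slots of Assumption~\ref{ass:diffusion}, and this is fatal. You set $b^1:=b\,\mathbf{1}_{\{|b|>N\}}$ and then try to show $b^1/(1+|x|)\in L^{1+\eps}_t L^\infty_x$. But $b^1_t$ is precisely the part of $b_t$ where it is \emph{large}; since $b_t$ only lives in $L^p_x$ (or $\tilde L^p_x$) with $p<\infty$, it can have genuine local singularities, and dividing by $1+|x|$ does nothing to tame those (the weight only helps as $|x|\to\infty$, not near a point where $b_t$ blows up). So $\|b^1_t/(1+|x|)\|_{L^\infty_x}=\infty$ in general, and no amount of exponent bookkeeping --- the part of your write-up that trails off mid-sentence --- can repair this. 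The paper's Lemma~\ref{lem:decomposition-lemma} does exactly the opposite: the \emph{sub}-threshold part $f^\leq_t=f_t\mathbbm{1}_{\{|f_t|\le R_t\}}$ is the one sent to $L^{1+\eps}_t L^\infty_x$, which is trivial because $\|f^\leq_t\|_{L^\infty_x}\le R_t$ by construction; the \emph{super}-threshold part $f^>_t$ is sent to $L^\infty_t\tilde L^{d+\eps}_x$ via Chebyshev, since on $\{|f_t|>R_t\}$ one has $|f_t|^{d+\eps}\le R_t^{d+\eps-p}|f_t|^p$.

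There is a second, independent problem: you use a fixed threshold $N$. Even with the roles of $b^1,b^2$ swapped to the correct ones, a $t$-independent cut cannot deliver the required \emph{$L^\infty$-in-time} control of the singular piece, because $\|b_t\|_{L^p_x}$ is only $L^q$ in $t$. The paper's proof uses the time-dependent threshold $R_t=\|f_t\|_{L^p_x}^{p/(p-d-\eps)}$, tuned exactly so that (i) Chebyshev yields $\|f^>_t\|_{L^{d+\eps}_x}\le 1$ uniformly in $t$, and (ii) the relation $\tfrac{1+\eps}{q}+\tfrac{d+\eps}{p}=1$ (obtainable by the strict inequality $\tfrac1q+\tfrac dp<1$) converts $R_t^{1+\eps}$ into $\|f_t\|_{L^p_x}^q$, giving the $L^{1+\eps}_t$ integrability of the bounded piece. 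Neither of these two mechanisms appears in your proposal. The mixed-norm digression ($L^q_tL^p_x$ vs.\ $L^p_xL^q_t$) is a red herring here: that distinction belongs to Krylov's formulation, while the paper's Assumption~\ref{ass:diffusion} and Lemma~\ref{lem:decomposition-lemma} work entirely with $L^q_t\tilde L^p_x$ and do not branch on whether $p\ge q$.
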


To state the next corollary, we need to define weak solutions to Fokker-Planck equations.

\begin{definition}\label{defn:FP}
Given measurable $b$, $\sigma$, set $a:=\sigma\sigma^\ast$. We say that a flow of measures $t\mapsto \mu_t$ is a weak solution to the Fokker-Planck equation
\begin{equation}\label{eq:FP}
	\partial_t \mu + \div (b \mu) = \frac{1}{2} \sum_{i,j} \partial^2_{ij} (a_{ij} \mu)
\end{equation}
if $t\mapsto \mu_t$ is continuous in the sense of distributions, $b\mu$ and $a_{ij}\mu$ are well defined distributions and for any $\varphi\in C^{\infty}_c((0,T)\times \R^d)$ it holds
\begin{equation}\label{eq:FP-weak}
	\int_{[0,T]} \int_{\R^d} (\partial_t \varphi + b_t\cdot\nabla \varphi + \frac{1}{2}\sum_{i,j} a_{ij} \partial^2_{ij} \varphi)(x) \mu_t(\!\dd x) \dd t = 0.
\end{equation}
\end{definition}

\begin{corollary}\label{cor:main-cor-2}
Let $b$, $\sigma$ satisfy Assumption \ref{ass:diffusion}. Then for any $\mu_0\in \cP_1$, there exists a weak solution $\mu$ to the Fokker-Planck equation \eqref{eq:FP} in the sense of Definition \ref{defn:FP}, with the properties that $t\mapsto \mu_t$ is continuous in the weak topology of measures and $\mu\vert_{t=0}=\mu_0$. Moreover $\mu\in L^{\tilde q}_t L^{\tilde p}_{x}$ for any $(\tilde p, \tilde q)$ satisfying
\begin{equation}\label{eq:integrability-density}
\frac{1}{\tilde q} + \frac{d}{\tilde p} >d, \quad (\tilde p, \tilde q)\in (1,\infty)^2.
\end{equation}
In particular, this ensures that $b \mu,\, a \mu \in L^1_t L^1_{loc}$.
\end{corollary}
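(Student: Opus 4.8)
The plan is to derive the Fokker--Planck equation directly from the weak solution $X$ furnished by Theorem \ref{thm:main-theorem}, using It\^o's formula, and to obtain the claimed integrability of $\mu_t:=\Law(X_t)$ by a Krylov-type estimate transported through the partial Zvonkin transform $\Phi$. First I would fix the weak solution $(\Omega,\cF,\{\cF_t\},\PP;X,W)$ with $X_0\sim\mu_0$ and set $\mu_t:=\Law(X_t)$; by the integrability requirement in Definition \ref{defn:weak-solution} and a localization argument, for $\varphi\in C_c^\infty((0,T)\times\R^d)$ It\^o's formula applied to $\varphi(t,X_t)$ and then taking expectations kills the martingale term and yields exactly the identity in Definition \ref{defn:FP}; continuity of $t\mapsto\mu_t$ in the weak topology follows from continuity of the paths of $X$ (the martingale term being a genuine It\^o integral since $\sigma$ is bounded), and $\mu|_{t=0}=\mu_0$ is immediate. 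The point that needs care here is that $b_t\mu_t$ and $a_{ij}\mu_t$ are well-defined distributions, which is precisely what the integrability statement $\mu\in L^{\tilde q}_tL^{\tilde p}_x$ and the final sentence $b\mu,a\mu\in L^1_tL^1_{\loc}$ are meant to guarantee; so the core of the proof is the density estimate.

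For the integrability of $\mu$, the key step is to exploit the new SDE satisfied by $Y_t:=\Phi_t(X_t)$ obtained in the proof of Theorem \ref{thm:main-theorem}. By construction $\Phi$ is, for each $t$, a bi-Lipschitz homeomorphism of $\R^d$ with $\Phi,\Phi^{-1}$ and their relevant derivatives bounded (this is the output of the parabolic PDE solved in the partial Zvonkin step), $Y$ solves an SDE with locally bounded, linearly growing drift $\tilde b$ and a diffusion coefficient $\tilde\sigma$ still satisfying \eqref{eq:ass-diffusion-1}--\eqref{eq:ass-diffusion-2}, and $Y_0\in\cP_1$. For such an SDE one has the classical Krylov estimate: there is $C=C(T,K,d,\tilde p,\tilde q)$ with
\begin{align*}
\E\int_0^T |f_t(Y_t)|\dd t \le C\,\|f\|_{L^{\tilde q}_tL^{\tilde p}_x}
\end{align*}
for all $f\in L^{\tilde q}_tL^{\tilde p}_x$, valid under the subcritical condition dual to \eqref{eq:integrability-density}, i.e. $2/\tilde q' + d/\tilde p' < 2$ in the usual normalization, which after passing to the dual exponents is exactly $1/\tilde q + d/\tilde p > d$ with $(\tilde p,\tilde q)\in(1,\infty)^2$. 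By duality this means precisely that $\nu_t:=\Law(Y_t)$ lies in $L^{\tilde q}_tL^{\tilde p}_x$ with the corresponding norm bound. Then I would push forward: since $X_t=\Phi_t^{-1}(Y_t)$ and $\Phi_t^{-1}$ is bi-Lipschitz with uniformly bounded Jacobian determinant above and below, $\mu_t$ is absolutely continuous whenever $\nu_t$ is and $\|\mu_t\|_{L^{\tilde p}_x}\lesssim\|\nu_t\|_{L^{\tilde p}_x}$ uniformly in $t$, so $\mu\in L^{\tilde q}_tL^{\tilde p}_x$ for the same range of exponents. Finally, for the last assertion: picking one admissible pair with $\tilde p>d$ (possible since the region \eqref{eq:integrability-density} is open and nonempty near $\tilde q=\infty^-$, $\tilde p$ slightly above $d$), H\"older's inequality against $b^2\in L^\infty_t\tilde L^{d+\eps}_x$ on any ball, the linear-growth bound on $b^1$ combined with $\mu\in\cP_1$ for the $b^1$ part, and $\sigma$ bounded for the $a\mu$ part, give $b\mu,a\mu\in L^1_tL^1_{\loc}$.

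I expect the main obstacle to be bookkeeping around the partial Zvonkin transform rather than any deep new difficulty: one must check that the Krylov estimate genuinely applies to the transformed SDE, i.e.\ that $\tilde\sigma=\nabla\Phi\,\sigma(\nabla\Phi)^\ast$ (composed appropriately) still enjoys \eqref{eq:ass-diffusion-1}--\eqref{eq:ass-diffusion-2} with constants controlled by $K$, $\omega_\sigma$ and the fixed $\Phi$, and that $\tilde b$ is locally bounded with linear growth so that the It\^o--Krylov machinery (e.g.\ via an approximation of $f$ by smooth functions and the associated parabolic estimate $\|u\|_{L^\infty_t L^\infty_x}+\|\partial^2 u\|_{L^{\tilde q}_tL^{\tilde p}_x}\lesssim\|f\|_{L^{\tilde q}_tL^{\tilde p}_x}$) is available; all of this is standard once $\Phi$ is fixed, but it requires referencing the precise properties of $\Phi$ established in the proof of Theorem \ref{thm:main-theorem}. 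A secondary, purely arithmetic point is verifying that the exponent region transported back through the bi-Lipschitz change of variables is stable, which it is because the change of variables only costs a bounded multiplicative constant and does not alter exponents.
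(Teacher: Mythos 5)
Your proposal takes a genuinely different route from the paper. The paper revisits the approximating sequence $(b^n,\sigma^n,X^n)$ from the proof of Theorem~\ref{thm:main-theorem}: each $\mu^n_t=\cL(X^n_t)$ solves the Fokker--Planck equation exactly (since the mollified coefficients are smooth), the convergence $b^n\mu^n\to b\mu$ and $a^n\mu^n\to a\mu$ has already been established in Section~\ref{sec:main-thm}, and the $L^{\tilde q}_tL^{\tilde p}_x$ bound is inherited from the uniform bound \eqref{eq:uniform-bounds} (i.e.\ Lemma~\ref{lem:apriori-density}) by lower semicontinuity of norms under weak convergence. You instead work directly with the limit object $X$, invoking It\^o's formula and a Krylov estimate for the transformed process $Y=\Phi(X)$.

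There are concrete gaps in your version. First, the ``classical Krylov estimate'' you invoke for $Y$ requires a bounded drift, but $\tilde b$ is only of linear growth with $h\in L^{1+\eps}_t$ time integrability; this is precisely why Lemma~\ref{lem:apriori-density} does \emph{not} reduce to a textbook Krylov bound and instead uses the decomposition $f=f^\le+f^>$ from Lemma~\ref{lem:decomposition-lemma}: the piece $f^\le\in L^{1+\eps}_tL^\infty_x$ is handled trivially, while $f^>\in L^\infty_tL^{d+\eps}_x$ is absorbed through a second Zvonkin-type PDE and controlled using the first-moment estimate of Lemma~\ref{lem:moments-X}. Second, the exponent arithmetic is off: dualizing $2/\tilde q'+d/\tilde p'<2$ gives $2/\tilde q+d/\tilde p>d$, \emph{not} $1/\tilde q+d/\tilde p>d$; the paper's (smaller) range corresponds to $1/\tilde q'+d/\tilde p'<1$, and this narrower region is exactly what the unboundedness of $b^1$ forces — the wider classical range cannot be expected here. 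Third, applying It\^o's formula directly to $X$ and taking expectations silently requires $\E\int_0^T|b_s(X_s)|\,\mathbbm{1}_K(X_s)\,\dd s<\infty$ (and integrability of the martingale term), but Definition~\ref{defn:weak-solution} only gives $\PP$-a.s.\ pathwise integrability; this is a circularity, since the needed integrability is part of what the corollary is supposed to prove. The paper sidesteps it cleanly by deriving the Fokker--Planck identity at the approximating level where everything is smooth, and then passing to the limit. Your physical intuition for why the density is integrable is sound, but to make it rigorous you would have to reproduce, in essence, the test-function-splitting argument of Lemma~\ref{lem:apriori-density} rather than cite a generic Krylov bound.
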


Let us give some comments on Theorem \ref{thm:main-theorem}.

\begin{remark}\label{rem:comparison-krylov}
Our result presents both advantages and drawbacks compared to the original one from \cite{krylov1}.
On one hand, we can only allow a strict inequality in \eqref{eq:exponents-weaker}, as a consequence of the parameter $\eps>0$ in \eqref{eq:ass-drift}; on the other, we can allow for drifts being either unbounded (at most of linear growth) or belonging to localised Lebesgue spaces $\tilde L^p_x$.
Finally, contrary to \cite{krylov1}, our condition $b\in L^q_t \tilde L^p_x$ doesn't change depending on whether $q\leq p$ or $p\leq q$, which makes it slightly more natural in analogy with \eqref{eq:LPS}.
\end{remark}

\begin{remark}\label{rem:nonuniqueness}
Both the result from \cite{krylov1} and Theorem \ref{thm:main-theorem} only establish weak existence of solutions.
In fact, counterexamples to uniqueness in law in Besov spaces have been constructed in \cite[Section 1.3]{GalGer2022}; by Remark 1.8 therein, for any choice of $(p,q)\in [1,\infty]^2$ satisfying
\begin{align*}
	\frac{2}{q}+\frac{d}{p}>1,\quad p>d,
\end{align*}
one can construct a drift $b\in L^q_t L^p_x$ for which there is non-uniqueness in law for \eqref{eq:intro-sde}.
\end{remark}

\begin{remark}\label{rem:navier-stokes}
In light of Remark \ref{rem:nonuniqueness}, it might seem that condition like \eqref{eq:krylov-exponents} is not so interesting; however it might have relevant applications for nonlinear PDEs.
To illustrate this, let us consider the prototypical case of the $3$D Navier--Stokes equations (the same which motivated the interest in \eqref{eq:LPS}); we refer to \cite{RoRoSa2016} for a general overview and to \cite{ZhaZha2021} for a discussion of the associated Lagrangian SDE.
Leray weak solutions satisfy $u\in L^\infty_t L^2_x\cap L^2_t H^1_x$, which by Sobolev embeddings implies
\begin{equation}\label{eq:regularity-navier-stokes}
	u\in L^q_t L^p_x\quad \text{for} \quad \frac{2}{q}+\frac{3}{p} = \frac{3}{2} \quad
	\forall\, p\in [2,6]
\end{equation}
which is considerably far from \eqref{eq:LPS}.
However, by taking $q=2$, $p=6$, condition \eqref{eq:regularity-navier-stokes} interesects with \eqref{eq:krylov-exponents}, allowing to invoke the results from \cite{krylov1} to obtain a priori estimates for the associated SDE.
It should be also mentioned that, exploiting the fact that $u$ is divergence free and its Sobolev regularity, recently \cite{ZhaZha2021} and \cite{zhao2020} were able to construct weak solutions and prove uniqueness of the stochastic Lagrangian flow associated to $u$. In this sense, condition \eqref{eq:krylov-exponents} is just another small piece of the puzzle, hinting that \eqref{eq:LPS} might not be the end of the story for Navier-Stokes equations.
\end{remark}

\begin{remark}\label{rem:generalizations}
We expect our strategy to work in other cases, for instance: i) $b$ of the form $b=b^1+\ldots+b^n$ with $b^i\in L^{q_i}_t \tilde L^{p_i}_x$ with $(q_i,p_i)$ satisfying \eqref{eq:exponents-weaker}; ii) coefficients belonging to mixed normed spaces, i.e. $b\in L^q_t L^{p_1}_{x_1} \ldots L^{p_d}_{x_d}$ with $1/q+\sum_i 1/p_i<1$, in analogy to what was obtained in \cite{ling2021strong} as a refinement of \cite{krylov2005}.

Something more interesting would be to understand whether one can obtain novel existence and/or uniqueness results by interpolating other classes of drifts. For instance, one could consider $L^1_t L^\infty_x$ and $L^\infty_t C^\gamma_x$ with $\gamma>-1/2$, where for the latter weak existence and uniqueness of solutions was established in \cite{FlIsRu2017} again by Zvonkin transform. We leave this problem for future investigations.
\end{remark}

\subsection*{Structure of the paper}
We conclude this introduction by explaining the relevant notations and conventions.
In Section \ref{sec:preliminaries} we recall some analytic tools, most notably involving the resolution of parabolic PDEs, invoked throughout the paper. We develop all the relevant a priori estimates for our SDE in Section \ref{sec:apriori}, by first considering smooth coefficients; then in Section \ref{sec:main-thm} we prove our results, by running a compactness argument and passing to the limit.

\subsection*{Notations and conventions}
We always work on a finite time interval $[0,T]$, although arbitrarily large. We write $a\lesssim b$ to mean that there exists a positive constant $C$ such that $a \leq C b$; we use the index $a\lesssim_\lambda b$ to highlight the dependence $C=C(\lambda)$.

For any $m\in \mathbb{N}$ and $p\in [1,\infty]$, we denote by $L^p(\R^d;\R^m)$ the standard Lebesgue space; when there is no risk of confusion in the parameter $m$, we will simply write $L^p_x$ for short and denote by $\| \cdot\|_{L^p_x}$ the corresponding norm. Similarly, we denote by $L^p_{loc}(\R^d;\R^m)=L^p_{loc}$ local Lebesgue spaces, endowed with their natural Frechét topology; finally, we consider uniformly local Lebesgue spaces $\tilde L^p(\R^d;\R^m)=\tilde L^p_x$ as defined by 
\begin{align*}
	\tilde{L}^p_x:=\Big\{\varphi \in L^p_{loc}(\R^d;\R^m): \|
	\varphi\|_{\tilde L^p_x}:= \sup_{z\in\R^d} \| \chi^z \varphi \|_{L^p_x}<\infty\Big\};
\end{align*}
here $\chi^z:=\chi(\cdot - z)$, where $\chi$ is a smooth nonnegative function such that $\chi(x)= 1$ if $|x|\leq 1$ and $\chi(x)=0$ if $|x|\geq 2$.
By a finitely covering technique, one can check that the definition of $\tilde L^p_x$ does not depend on $\chi$, cf. \cite[p. 5193]{XXZZ2020}.
Similarly, one can check by covering arguments that for $p=\infty$, the spaces $\tilde L^\infty_x$ and $L^\infty_x$ coincide with equivalent norms:
\begin{equation}\label{eq:uniform_local_Linfty}
	\| \varphi\|_{\tilde L^\infty_x}= \sup_{z\in\R^d} \| \chi^z \varphi\|_{L^\infty_x} \sim_\chi \|\varphi\|_{L^\infty_x}.
\end{equation}
$H^1_x=H^1(\R^d;\R^m)$ is used to denote the Sobolev space of functions in $L^2_x$ whose weak differential is also in $L^2_x$.

For $\alpha\in [0,+\infty)$, $C^\alpha (\mathbb{R}^d;\R^m)=C_x^\alpha$ stands for the usual H\"older continuous function space, made of continuous bounded functions with continuous and bounded derivatives up to order $\lfloor \alpha\rfloor\in\mathbb{N}$ and with globally $\{\alpha\}$-H\"older continuous derivatives of order $\lfloor \alpha\rfloor$. Similarly to the case $L^\infty_x$ treated above in \eqref{eq:uniform_local_Linfty}, if we defined a uniformly local H\"older space $\tilde C^\alpha_x$, we would still end up with $C^\alpha_x$:
\begin{equation}\label{eq:uniform_local_holder}
	\| \varphi\|_{\tilde C^\alpha_x} := \sup_{z\in\R^d} \| \chi^z \, \varphi\|_{C^\alpha_x}\sim_{\alpha,\chi} \| \varphi\|_{C^\alpha_x}.
\end{equation}
In other words, for $L^\infty_x$-based spaces, uniformly local and global estimates coincide.

Given a Banach space $E$, we denote by $C([0,T];E)=C_t E$ the set of all continuous functions $\varphi:[0,T]\to E$, endowed with the supremum norm $\| \varphi\|_{C^0_t E}=\sup_{t\in [0,T]} \| \varphi_t\|_E$. Similarly for $\gamma\in (0,1)$ we define $C^\gamma([0,T];E)=C^\gamma_t E$ as the set of $\gamma$-H\"older continuous functions, with associated seminorm and norm
\begin{equation*}
	\llbracket \varphi \rrbracket_{C^\gamma_t E}:=\sup_{s\neq t} \frac{\| \varphi_t-\varphi_s\|_E}{|t-s|^\alpha}, \quad
	\| \varphi \|_{C^\gamma_t E} := \| \varphi \|_{C^0_t E} +  \llbracket \varphi \rrbracket_{C^\gamma_t E}.
\end{equation*}

Given a Frechét space $E$, with topology induced by a countable collection of seminorms $(d_j)_{j\in\mN}$, and a parameter $q\in [1,\infty]$, we denote by $L^q(0,T;E)=L^q_t E$ the space of measurable functions $\varphi:[0,T]\to E$ such that $\int_0^T d_j( \varphi_t, 0)^q \dd t <\infty$ for all $j\in\mN$ (with the usual convention for $q=\infty$).
Similarly, we say that $\varphi^n\to \varphi$ in $L^q_t E$ if 
\begin{align*}
	\lim_{n\to\infty} \int_0^T d_j( \varphi^n_t, \varphi_t)^q \dd t <\infty \quad \forall\, j\in \mN.
\end{align*}
The above definitions can be concatenated by choosing different $E$, so that one can define $C^\gamma_t C^0_x$, $L^\infty_t C^1_x$, $L^q_t \tilde L^p_x$ and so on. Whenever $q=p$, we might write for simplicity $L^p_{t,x}$ in place of $L^p_t L^p_x$.
When $E=\R^d$, for simplicity we will drop it and just write $L^q_t$, $C^\gamma_t$, in place of $L^q_t \R^d$, $C^\gamma_t \R^d$.

Whenever we are given a filtered probability space $(\Omega,\mathcal{F},\{\cF_t\}_{t\geq 0},\mathbb{P})$, we will always assume the filtration $\{\cF_t\}_{t\geq 0}$ to satisfy the standard assumptions. We denote by $\E$ expectation w.r.t. $\PP$; if $X$ is a random variable define on $\Omega$, we denote by $\cL(X)=\PP\circ X^{-1}$ its law under $\PP$.

\section{Analytic preliminaries}\label{sec:preliminaries}

As mentioned above, a primary tool in our analysis is the so called Zvonkin transformation, which is related to solving a class of backward parabolic PDEs of the form
\begin{equation}\label{eq:zvonkin-pde}
\partial_t u + \frac{1}{2}a:D^2 u + g\cdot\nabla u -\lambda u= -f, \quad u\vert_{t=T}=0.
\end{equation}
Here we assume we are given $\sigma$ satisfying conditions \eqref{eq:ass-diffusion-1}-\eqref{eq:ass-diffusion-2} and we define the associated parameter set $\Theta:=(T,d,K,\omega_\sigma)$; we adopt the notations $a=\sigma \sigma^\ast$, $a:D^2 u = \sum_{i,j} a_{ij} \partial^2_{ij} u$ and $g\cdot \nabla u=\sum_{i}g_i\partial_i u$. If $u$ and $f$ are vector-valued, then \eqref{eq:zvonkin-pde} is understood componentwise.

\begin{proposition}\label{prop:zvonkin}
Let $\sigma$ satisfy \eqref{eq:ass-diffusion-1}-\eqref{eq:ass-diffusion-2}, $\eps>0$ and $g\in L^\infty_t\tilde L^{d+\eps}_x$.
Then there exists $\lambda_0\geq 1$, depending on $\Theta$, $\eps$ and $\| g \|_{L^{\infty}_t \tilde L^{d+\eps}_x}$, such that for all $\lambda\geq \lambda_0$ and for all $f\in L^\infty_t \tilde L^{d+\eps}_x$ there exists a unique strong solution $u$ to the PDE \eqref{eq:zvonkin-pde}.
Furthermore there exist $\delta=\delta(\eps)>0$ and $C=C(\Theta,\eps,\| g \|_{L^{\infty}_t \tilde L^{d+\eps}_x})$ such that
\begin{equation}\label{eq:zvonkin-estim}
\lambda^\delta \| u\|_{C^0_t C^1_x} + \| u\|_{C^{1/2}_t C^0_x} \leq C \| f\|_{L^\infty_t \tilde L^{d+\eps}_x}.
\end{equation}
%
\end{proposition}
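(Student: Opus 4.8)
The plan is to treat \eqref{eq:zvonkin-pde} as a perturbation of the model parabolic equation $\partial_t u + \frac12 a:D^2u - \lambda u = -f$, using the good mapping properties of the associated semigroup in the scale of uniformly local Lebesgue and Hölder spaces, and then absorbing the first-order term $g\cdot\nabla u$ by a contraction argument exploiting the smallness gained from large $\lambda$. First I would record the relevant Schauder/$L^p$-type estimates: since $\sigma$ is bounded, nondegenerate and uniformly continuous in space (Assumption \ref{ass:diffusion}), the operator $\partial_t + \frac12 a:D^2$ generates an evolution family whose kernel satisfies Gaussian bounds, hence for the solution $w$ of the model problem one has a gain of essentially two spatial derivatives measured in mixed norms. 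Concretely, localising with the cutoffs $\chi^z$ to pass from $L^p_x$ to $\tilde L^p_x$ (the commutator terms $[\chi^z,\Delta]$ being lower order and controlled uniformly in $z$), one gets, for $f\in L^\infty_t\tilde L^{d+\eps}_x$, an estimate of the form
\begin{equation*}
\lambda^{1-\theta}\|w\|_{C^0_tC^1_x} + \lambda^{1/2-\theta}\llbracket w\rrbracket_{C^{1/2}_tC^0_x} \lesssim_\Theta \|f\|_{L^\infty_t\tilde L^{d+\eps}_x},
\end{equation*}
where $\theta=\theta(\eps)\in(0,1/2)$ comes from the Morrey-type embedding turning the $\tilde L^{d+\eps}_x$-integrability of $\lambda w$ and its derivatives into $C^1_x$-regularity (here $d+\eps>d$ is exactly what is needed, and $\delta=\delta(\eps)$ in \eqref{eq:zvonkin-estim} will be of this form). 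The key scaling point is that each resolvent/semigroup application against $\lambda$ produces a negative power of $\lambda$ with a fixed exponent.

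Next I would set up the fixed-point map $\mathcal{T}:u\mapsto w$, where $w$ solves the model equation with right-hand side $f+g\cdot\nabla u$. By the previous estimate applied with right-hand side $g\cdot\nabla u$, and using $\|g\cdot\nabla u\|_{L^\infty_t\tilde L^{d+\eps}_x}\leq \|g\|_{L^\infty_t\tilde L^{d+\eps}_x}\|\nabla u\|_{C^0_tC^0_x}\leq \|g\|_{L^\infty_t\tilde L^{d+\eps}_x}\|u\|_{C^0_tC^1_x}$, one obtains
\begin{equation*}
\|\mathcal{T}u_1-\mathcal{T}u_2\|_{C^0_tC^1_x} \leq C\lambda^{-\delta}\|g\|_{L^\infty_t\tilde L^{d+\eps}_x}\,\|u_1-u_2\|_{C^0_tC^1_x},
\end{equation*}
so choosing $\lambda_0$ (depending on $\Theta$, $\eps$ and $\|g\|_{L^\infty_t\tilde L^{d+\eps}_x}$) large enough that $C\lambda_0^{-\delta}\|g\|_{L^\infty_t\tilde L^{d+\eps}_x}\leq 1/2$ makes $\mathcal{T}$ a contraction on $C^0_tC^1_x$ for all $\lambda\geq\lambda_0$. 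This yields a unique solution $u$; feeding it back once more into the model estimate gives \eqref{eq:zvonkin-estim} with the stated dependence of the constant $C$, since the contribution of $g\cdot\nabla u$ on the right-hand side is absorbed into the left by the smallness of $\lambda^{-\delta}\|g\|$. Uniqueness of strong solutions in the natural class follows from the same contraction estimate (or directly from a maximum-principle / energy argument for the difference).

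The main obstacle I anticipate is making the model-problem estimate rigorous in the \emph{uniformly local} spaces $\tilde L^{d+\eps}_x$ rather than in $L^p_x$: one must verify that the Gaussian heat-kernel bounds (valid here because $a$ is merely uniformly continuous, so one is outside the classical Schauder framework and must rely on continuity of $\sigma$ to freeze coefficients) interact well with the translation-invariant family of cutoffs $\chi^z$, so that all commutator and tail contributions are bounded by $\sup_z\|\chi^z f\|_{L^{d+\eps}_x}$ uniformly in $z$ and in $t$; the off-diagonal decay of the kernel is what controls the sum over the lattice of translates. A secondary technical point is tracking the precise power $\delta(\eps)$ through the Morrey embedding $W^{2,d+\eps}_{loc}\hookrightarrow C^1$ and the interpolation giving the $C^{1/2}_tC^0_x$ bound; but once the uniformly-local heat-kernel estimate is in hand, the rest is a routine Banach fixed-point argument.
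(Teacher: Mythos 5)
Your proposal is mathematically sound but takes a genuinely different route from the paper. The paper's proof is essentially a reduction to a black-box result: after time reversal, it invokes \cite[Theorem~3.2]{XXZZ2020} directly with the parameters $p=d+\eps$, $p'=q'=\infty$, and a well-chosen $\alpha>1$, $q<\infty$; that theorem already handles the variable diffusion $a$, the first-order term $g\cdot\nabla u$, \emph{and} the uniformly local scale $\tilde L^p_x$ all at once. What remains in the paper is only to convert the resulting $\tilde{\mathbb{H}}^{\alpha,\infty}_\infty$ and $\tilde{\mathbb{H}}^{2,d+\eps}_q$ bounds into the $C^0_t C^1_x$ estimate (via an embedding) and the $C^{1/2}_t C^0_x$ estimate (via space--time interpolation between $\partial_t u$ and $D^2 u$). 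You instead propose to reconstruct the key a priori estimate from scratch: prove model estimates for $\partial_t + \tfrac12 a:D^2 - \lambda$ via a parametrix / frozen-coefficient argument with Gaussian bounds, localize with the cutoffs $\chi^z$ to pass to $\tilde L^{d+\eps}_x$, and then absorb $g\cdot\nabla u$ by a Banach fixed point using the $\lambda^{-\delta}$ gain. This is the correct skeleton (your contraction estimate $\|\mathcal{T}u_1-\mathcal{T}u_2\|_{C^0_tC^1_x}\lesssim\lambda^{-\delta}\|g\|_{L^\infty_t\tilde L^{d+\eps}_x}\|u_1-u_2\|_{C^0_tC^1_x}$ is exactly the mechanism, and your Gagliardo--Nirenberg exponent $\delta\sim\eps/(d+\eps)$ has the right form), and it buys you independence from \cite{XXZZ2020}; the cost is that the model-problem estimate in uniformly local spaces with merely uniformly-continuous $a$ is itself a nontrivial piece of parabolic theory, which is precisely what the cited theorem packages up. You correctly identified this as the main obstacle. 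One small caveat: your claimed $\lambda^{1/2-\theta}$ gain on the $C^{1/2}_tC^0_x$ seminorm is stronger than what the paper asserts (and stronger than what is needed; the paper has no $\lambda$-gain there) and would require care to justify, but it is not used in the contraction, so it does not affect the argument.
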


\begin{proof}
Although the result is classical in the case of constant diffusion and classical Lebesgue spaces, we haven't found a direct reference in our setting; we will derive it from \cite[Theorem 3.2]{XXZZ2020}, which however makes the proof a bit technical.
Following \cite{XXZZ2020}, we will employ the spaces $\tilde{H}^{\alpha,p}$, $\tilde{\mathbb{H}}^{\alpha,p}_q(T)$ and $\tilde{\mathbb{L}}^p_q(T)$; we refer the reader to \cite[pp. 5192-3]{XXZZ2020} for their exact definition.

By time reversal, we can reduce ourselves to the case of a forward parabolic equation with $u\vert_{t=0}=0$.
By the hypothesis, we can find $q\in (1,\infty)$ large enough and $\alpha>1$ such that $2\delta:= 2-\alpha-2/q-d/(d+\eps)>0$; applying \cite[Theorem 3.2]{XXZZ2020} for such $\alpha$ and $q_1=q$, $p_1=p=d+\eps$, $p'=q'=\infty$, we deduce the wellposedness of \eqref{eq:zvonkin-pde} as well as the estimate
\begin{align*}
\lambda^\delta \| u\|_{\tilde{\mathbb{H}}^{\alpha,\infty}_\infty(T)} + \| \partial_t u\|_{\tilde{\mathbb{L}}^{d+\eps}_q(T)} + \| u\|_{\tilde{\mathbb{H}}^{2,d+\eps}_q(T)}
\lesssim \| f\|_{\tilde {\mathbb{L}}^{d+\eps}_q(T)}
\lesssim \| f\|_{L^\infty_t \tilde L^{d+\eps}_x}
\end{align*}
The estimate for $\| u\|_{C^0_t C^1_x}$ then follows from the embedding $\mathbb{H}^{\alpha,\infty}_\infty(T)\hookrightarrow L^\infty_t C^1_x$, since $\alpha>1$.

The estimate for $\| u\|_{C^{1/2}_t C^0_x}$ instead follows by interpolation arguments. Set $\theta=1/2+1/q$; then using \eqref{eq:uniform_local_holder} for $\alpha=0$, Sobolev embeddings and interpolation inequalities, for any $s\leq t$ it holds
\begin{align*}
	\| u_t-u_s\|_{C^0_x}
	& \lesssim \sup_{z\in \R^d} \| \chi^z (u_t-u_s)\|_{C^0_x}
	\leq \sup_{z\in \R^d} \| \chi^z (u_t-u_s)\|_{C^{1-2/q-d/(d+\eps)}_x}\\
	& \lesssim \| u_t-u_s\|_{\tilde H^{1-2/q,d+\eps}}
	\lesssim |t-s|^{1/2} \| \partial_t u\|_{\tilde{\mathbb{L}}^{d+\eps}_q(T)}^{\theta}\, \| u\|_{\tilde{\mathbb{H}}^{2,d+\eps}_q(T)}^{1-\theta}\\
	& \lesssim |t-s|^{1/2}\, \| f\|_{L^\infty_t \tilde{L}^{d+\eps}_x}.
\end{align*}
In particular, in the intermediate passage we used that (again by interpolation inequalities and Sobolev embeddings) for any $g$ it holds
\begin{align*}
	\llbracket g\rrbracket_{C^{1/2}_t H^{1-2/q,d+\eps}_x}
	\lesssim \| g\|_{\dot W^{1/2+1/q,q}_t L^{d+\eps}_x}
	\lesssim \| \partial_t g\|_{L^q_t L^{d+\eps}_x}^{\theta}\, \| g\|_{L^q_t H^{2,d+\eps}_x}^{1-\theta} \quad \text{for} \quad \theta=\frac{1}{2}+\frac{1}{q}.
\end{align*}
Combining the previous estimate with the fact that $u\vert_{t=0}=0$ readily yields the bound for $\| u\|_{C^{1/2}_t C^0_x}$.
\end{proof}
Let $b=b^1+b^2$ and $\sigma$ as in Assumption \ref{ass:diffusion}. By virtue of estimate \eqref{eq:zvonkin-estim}, we can find $\bar{\lambda}=\bar\lambda(\Theta,\eps,\| b^2\|_{L^\infty_t \tilde{L}^{d+\eps}_x})$ such that the vector-valued solution $u:=u^b$ to the PDE \eqref{eq:zvonkin-pde} associated to $f=g=b^2$ and $\bar\lambda$ satisfies $\| u\|_{C^0_t C^1_x} \leq 1/2$.
Correspondingly, we define the \emph{partial Zvonkin transform} associated to $b$ to be $\Phi_t(x):= x + u^b_t(x)$.

\begin{lemma}\label{lem:properties-zvonkin}
For any $t\in [0,T]$, $\Phi_t$ is a diffeomorphism of $\R^d$ into itself and there exists a constant $C=C(\Theta,\eps,\| b^2\|_{L^\infty_t \tilde{L}^{d+\eps}_x})$ such that for all $x,\,y\in \R^d$ and $s,\,t\in [0,T]$ it holds
\begin{equation}\label{eq:properties-zvonkin}
	\frac{1}{2} |x-y|\leq |\Phi_t(x)-\Phi_t(y)| \leq 2 |x-y|, \quad 
	|\Phi_t(x)-\Phi_s(x)|\leq C |t-s|^{1/2}.
\end{equation}
Moreover the same estimate holds with $\Phi_t$ replaced by its inverse $\Phi^{-1}_t$.
\end{lemma}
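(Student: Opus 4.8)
The plan is to read off everything from the bound $\| u^b\|_{C^0_t C^1_x}\le 1/2$ guaranteed by Proposition~\ref{prop:zvonkin} and the construction of $\bar\lambda$, together with the H\"older-in-time estimate $\| u^b\|_{C^{1/2}_t C^0_x}\le C$ from \eqref{eq:zvonkin-estim}. First I would write $\Phi_t(x)-\Phi_t(y) = (x-y) + (u^b_t(x)-u^b_t(y))$ and estimate the increment of $u^b_t$ by the mean value theorem: $|u^b_t(x)-u^b_t(y)|\le \| Du^b_t\|_{L^\infty_x}|x-y|\le \| u^b\|_{C^0_t C^1_x}|x-y|\le \tfrac12 |x-y|$. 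Plugging this in and using the triangle inequality both ways gives $\tfrac12|x-y|\le |\Phi_t(x)-\Phi_t(y)|\le \tfrac32|x-y|\le 2|x-y|$, which is the first inequality in \eqref{eq:properties-zvonkin}. The lower bound in particular shows $\Phi_t$ is injective. The time-regularity bound is immediate: $\Phi_t(x)-\Phi_s(x)=u^b_t(x)-u^b_s(x)$, so $|\Phi_t(x)-\Phi_s(x)|\le \| u^b\|_{C^{1/2}_t C^0_x}|t-s|^{1/2}\le C|t-s|^{1/2}$.

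Next I would argue that $\Phi_t$ is a diffeomorphism onto $\R^d$. Since $u^b_t\in C^1_x$, $\Phi_t=\mathrm{id}+u^b_t$ is $C^1$, and its Jacobian is $D\Phi_t = I + Du^b_t$ with $\| Du^b_t\|_{L^\infty_x}\le 1/2<1$; hence $D\Phi_t(x)$ is invertible for every $x$ (its eigenvalues lie in a ball of radius $1/2$ around $1$), so by the inverse function theorem $\Phi_t$ is a local $C^1$-diffeomorphism. Combined with injectivity from the lower bound just proved, $\Phi_t$ is a global diffeomorphism onto its image $\Phi_t(\R^d)$; properness of $\Phi_t$ (the lower bound forces $|\Phi_t(x)|\to\infty$ as $|x|\to\infty$, so preimages of compacts are compact) makes $\Phi_t(\R^d)$ both open and closed in $\R^d$, hence all of $\R^d$. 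Thus $\Phi_t:\R^d\to\R^d$ is a $C^1$-diffeomorphism.

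Finally, for the statement about $\Psi_t:=\Phi_t^{-1}$, I would transfer the estimates. Writing $x'=\Phi_t(x)$, $y'=\Phi_t(y)$, the first inequality in \eqref{eq:properties-zvonkin} rearranges to $\tfrac12|\Psi_t(x')-\Psi_t(y')|\le |x'-y'|\le 2|\Psi_t(x')-\Psi_t(y')|$, i.e. exactly the same two-sided bound for $\Psi_t$. For the time increment, fix $x$ and set $y:=\Psi_t(x)$, so $\Phi_t(y)=x$; then
\begin{align*}
|\Psi_t(x)-\Psi_s(x)| = |\Psi_s(\Phi_t(y))-\Psi_s(\Phi_s(y))| \le 2\, |\Phi_t(y)-\Phi_s(y)| \le 2C|t-s|^{1/2},
\end{align*}
using the Lipschitz bound for $\Psi_s$ in the first inequality and the time bound for $\Phi$ in the second; renaming the constant gives the claim. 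There is no serious obstacle here: the whole lemma is a soft consequence of the contraction bound $\|\nabla u^b\|\le 1/2$ and the parabolic estimate \eqref{eq:zvonkin-estim}. The only mild point requiring a line of care is the surjectivity of $\Phi_t$ (the open-and-closed argument above), which is what upgrades a local diffeomorphism plus injectivity to a genuine diffeomorphism of $\R^d$; everything else is triangle inequalities and the mean value theorem.
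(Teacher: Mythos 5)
Your proof is correct and follows essentially the same route as the paper: bound $|\nabla u^b|\le 1/2$ to get the bi-Lipschitz estimate and the diffeomorphism property, use $\|u^b\|_{C^{1/2}_t C^0_x}$ for the time regularity, and obtain the corresponding estimates for $\Phi_t^{-1}$ by reparametrisation (your computation for the time increment of $\Psi_t$ is the same as the paper's, just written with $y=\Psi_t(x)$ instead of taking a supremum over $x$). The only difference is that you explicitly carry out the local-diffeomorphism-plus-proper argument for surjectivity (i.e.\ the Hadamard global inverse function theorem), whereas the paper simply states that the bound on $\nabla u$ yields the diffeomorphism property; spelling this out is a welcome addition but does not change the argument.
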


\begin{proof}
The statement for $\Phi_t$ follows by its definition and the available estimates for $u^b$: it holds $\nabla\Phi_t(x)=I + \nabla u^b_t(x)$ with $|\nabla u^b_t(x)|\leq 1/2$, yielding the diffeomorphism property and the first estimate in \eqref{eq:properties-zvonkin}, while  $|\Phi_t(x)-\Phi_s(x)|=|u^b_t(x)-u^b_s(x)|\leq |t-s|^{1/2} \| u^b\|_{C^{1/2}_t C^0_x}$.
The bi-Lipschitz property for $\Phi^{-1}_t$ follows similarly; we are left with estimating the H\"older continuity of $t\mapsto \Phi^{-1}_t(x)$. It holds
\begin{align*}
	\sup_x |\Phi^{-1}_t(x)-\Phi^{-1}_s(x)|
	& = \sup_x | x - \Phi^{-1}_s(\Phi_t(x))|
	= \sup_x | \Phi^{-1}_s(\Phi_s(x)) - \Phi^{-1}_s(\Phi_t(x))|\\
	& \leq 2 \sup_x | \Phi_s(x)-\Phi_t(x)| \lesssim |t-s|^{1/2} \| u^b\|_{C^{1/2}_t C^0_x}. \qquad \qedhere
\end{align*}
\end{proof}
We conclude this section with a basic result, guaranteeing that any $f$ belonging in mixed Lebesgue spaces can be decomposed as in \eqref{eq:ass-drift}.
\begin{lemma}\label{lem:decomposition-lemma}
Let $f\in L^q_t L^p_x$ for some $(q,p)\in [1,\infty]$ satisfying $1/q+d/p<1$. Then there exists $\eps=\eps(p,q)>0$ such that $f$ can be decomposed as $f=f^\leq+f^>$, where
\begin{align*}
	f^\leq \in L^{1+\eps}_t L^\infty_x,
	\quad \| f^{\leq} \|_{L^{1+\eps}_t L^\infty_x} \leq \| f\|_{L^q_t L^p_x}^{\frac{q}{1+\eps}},
	\quad f^>\in L^\infty_t L^{d+\eps}_x,
	\quad \| f^>\|_{L^\infty_t L^{d+\eps}_x} \leq 1.
\end{align*}
A similar statement holds with $ L^p_x$ (resp. $ L^{d+\eps}_x$) replaced by $\tilde L^p_x$ (resp. $\tilde L^{d+\eps}_x$).
\end{lemma}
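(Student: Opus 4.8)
The plan is to decompose $f$ according to the size of its values in the $x$-variable, at each fixed time, by a simple truncation. For a threshold level $M>0$ (to be optimized later) define the time-dependent cutoff level $\lambda_t := \|f_t\|_{L^p_x}^\gamma$ for an exponent $\gamma$ to be chosen, and set $f^{\leq}_t := f_t \mathbf{1}_{\{|f_t|\leq \lambda_t\}}$ and $f^{>}_t := f_t \mathbf{1}_{\{|f_t|> \lambda_t\}}$. Then $f^{\leq}_t$ is pointwise bounded by $\lambda_t$, so $\|f^{\leq}_t\|_{L^\infty_x}\leq \|f_t\|_{L^p_x}^{\gamma}$; raising to the power $1+\eps$ and integrating in time gives $\|f^{\leq}\|_{L^{1+\eps}_t L^\infty_x}^{1+\eps}\leq \int_0^T \|f_t\|_{L^p_x}^{\gamma(1+\eps)}\dd t$, which equals $\|f\|_{L^q_t L^p_x}^q$ provided we take $\gamma(1+\eps)=q$, i.e. $\gamma = q/(1+\eps)$. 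This is exactly the claimed estimate for $f^{\leq}$.

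For the singular part, on the set $\{|f_t|>\lambda_t\}$ one uses that $|f_t|^{d+\eps} = |f_t|^p\, |f_t|^{d+\eps-p} \leq |f_t|^p\, \lambda_t^{d+\eps-p}$ whenever $d+\eps \leq p$ (note that $1/q+d/p<1$ forces $p>d$, so $d+\eps\leq p$ for $\eps$ small); here crucially the exponent $d+\eps-p$ is nonpositive, so the bound on $\{|f_t|>\lambda_t\}$ points the right way: $\lambda_t^{d+\eps-p}=\|f_t\|_{L^p_x}^{\gamma(d+\eps-p)}$. Integrating in $x$ then gives $\|f^{>}_t\|_{L^{d+\eps}_x}^{d+\eps} \leq \|f_t\|_{L^p_x}^{p}\, \|f_t\|_{L^p_x}^{\gamma(d+\eps-p)} = \|f_t\|_{L^p_x}^{p+\gamma(d+\eps-p)}$. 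To make the right-hand side bounded uniformly in $t$ by a constant times $\|f\|_{L^q_t L^p_x}$ to some power, I want the exponent $p+\gamma(d+\eps-p)$ to be controllable; the natural choice is to take it equal to $\gamma q$ as well, matching the $L^q_t L^p_x$ scaling — solving $p+\gamma(d+\eps-p)=\gamma q$ for $\gamma$, i.e. $\gamma = p/(q-d-\eps+p)$, and then verifying this is consistent with $\gamma=q/(1+\eps)$ up to rescaling $f$. In practice one first normalizes $\|f\|_{L^q_t L^p_x}=1$ (the general case following by scaling $f\mapsto f/\|f\|_{L^q_t L^p_x}$), in which case $\|f_t\|_{L^p_x}\leq 1$ for a.e.\ $t$ only after a further reduction; more honestly, one picks $\eps>0$ small enough that the two conditions on $\gamma$ can be met simultaneously, which is possible precisely because of the strict inequality $1/q+d/p<1$ — this leaves room for a positive $\eps$. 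Feeding back, one gets $\|f^{>}\|_{L^\infty_t L^{d+\eps}_x}\leq 1$ after the normalization.

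The main point to get right, and the place where the strict inequality is used, is the choice of $\eps$ and $\gamma$: one needs $\eps\in(0,1)$ small with $d+\eps<p$ and the exponent bookkeeping consistent, and the condition $1/q+d/p<1$ is exactly what guarantees a nonempty range of admissible $\eps$. I would phrase this cleanly by first observing that $1/q+d/p<1$ implies $p>d$ and $q>1$, then choosing $\eps>0$ small enough that still $1/q + d/(d+\eps) > 1 \geq$ (something), or more directly so that the linear system for $\gamma$ has a positive solution; a short continuity/openness argument suffices and I would not belabor the arithmetic.

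Finally, for the uniformly local version, the same pointwise truncation works verbatim: for each $z\in\R^d$ one has $\|\chi^z f^{\leq}_t\|_{L^\infty_x}\leq \|\chi^z f_t\|_{L^\infty_x}\leq \|f^{\leq}_t\|_{L^\infty_x}$ (the cutoff $\chi^z$ only helps), and $\|\chi^z f^{>}_t\|_{L^{d+\eps}_x}^{d+\eps}\leq \lambda_t^{d+\eps-p}\|\chi^z f_t\|_{L^p_x}^p \leq \lambda_t^{d+\eps-p}\, \|f_t\|_{\tilde L^p_x}^p$ uniformly in $z$, since $\lambda_t$ defined via $\|f_t\|_{\tilde L^p_x}$ dominates $\|\chi^z f_t\|_{L^p_x}$ for every $z$; taking the supremum over $z$ and then the relevant norm in $t$ gives the $\tilde L^p_x$ statement with the same exponents. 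So the only modification is to use $\|f_t\|_{\tilde L^p_x}$ in place of $\|f_t\|_{L^p_x}$ in the definition of the truncation level, and all the estimates above go through unchanged.
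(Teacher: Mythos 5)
Your truncation is the same as the paper's (a time-dependent cutoff $\lambda_t = \|f_t\|_{L^p_x}^\gamma$, keeping the small part in $L^\infty_x$ and the large part in $L^{d+\eps}_x$), and the estimate for $f^\leq$ is computed correctly. However, the second constraint you impose on $\gamma$ is the wrong one, and this is a genuine gap that your subsequent normalization hand-waving does not close.

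After the pointwise bound $\|f^>_t\|_{L^{d+\eps}_x}^{d+\eps}\leq \|f_t\|_{L^p_x}^{\,p+\gamma(d+\eps-p)}$, the goal is $\|f^>\|_{L^\infty_t L^{d+\eps}_x}\leq 1$, i.e.\ a bound \emph{uniform in $t$ and independent of $f$}. The only way to achieve this is to force the exponent to vanish: $p+\gamma(d+\eps-p)=0$, giving $\gamma = p/(p-d-\eps)$ and the identity $\|f^>_t\|_{L^{d+\eps}_x}\leq 1$ for a.e.\ $t$, no normalization required. Your choice $p+\gamma(d+\eps-p)=\gamma q$ instead produces $\|f^>_t\|_{L^{d+\eps}_x}^{d+\eps}\leq \|f_t\|_{L^p_x}^{\gamma q}$, which is not bounded uniformly in $t$: even after normalizing $\|f\|_{L^q_t L^p_x}=1$, one does \emph{not} get $\|f_t\|_{L^p_x}\leq 1$ for a.e.\ $t$ (you acknowledge this but the ``further reduction'' is never supplied, and there isn't a simple one). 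With the correct constraint $p+\gamma(d+\eps-p)=0$, combining it with $\gamma(1+\eps)=q$ yields precisely $\tfrac{1+\eps}{q}+\tfrac{d+\eps}{p}=1$, which admits a solution $\eps>0$ if and only if $\tfrac1q+\tfrac dp<1$ (at $\eps=0$ the left side is $<1$, and it increases continuously to $\infty$), and automatically ensures $d+\eps<p$. This is exactly the paper's choice $R_t=\|f_t\|_{L^p_x}^{p/(p-d-\eps)}$. The $\tilde L^p_x$ adaptation you describe is fine once the cutoff level is the correct one.
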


\begin{proof}
For notational simplicity, we give the proof in the case $f\in L^q_t L^p_x$, the other case being identical up to keeping track of $\chi^z$ in all the computations. The result is a basic consequence of interpolation theory, but let us give an explicit choice of the decomposition.
By the assumption, we can find $\eps>0$ such that
\begin{equation}\label{eq:interpolation-coeff}
	\frac{1+\eps}{q}+ \frac{d+\eps}{p}=1.
\end{equation}
For such choice, set
\begin{align*}
	f^\leq_t(x):= f_t(x) \mathbbm{1}_{|f_t(x)|\leq R_t}, \quad
	f^>_t(x):= f_t(x) \mathbbm{1}_{|f_t(x)|> R_t}, \quad
	R_t := \| f_t\|_{L^p_x}^{\frac{p}{p-d-\eps}}.
\end{align*}
Then it holds
\begin{align*}
	\| f^>_t\|_{L^{d+\eps}_x}
	\leq \Big( \int_{\R^d} R_t^{d+\eps-p} |f_t(x)|^p \dd x \Big)^{\frac{1}{d+\eps}}
	\leq R_t^{\frac{d+\eps-p}{d+\eps}} \| f_t\|_{L^p_x}^{\frac{p}{d+\eps}}=1 \quad \forall\, t\in [0,T]
\end{align*}
while by virtue of \eqref{eq:interpolation-coeff} we have
\begin{align*}
	\int_0^T \| f^\leq_t\|_{L^\infty_x}^{1+\eps} \dd t
	\leq \int_0^T R_t^{1+\eps} \dd t = \int_0^T \| f_t\|_{L^p_x}^q \dd t < \infty.  \qquad \qedhere
\end{align*}
\end{proof}

\section{A priori estimates}\label{sec:apriori}

Throughout this section, we will assume that, in addition to Assumption \ref{ass:diffusion}, $b$ and $\sigma$ are sufficiently regular; to fix the ideas, we will take $\sigma$ uniformly Lipschitz and $b\in L^1_t C^1_{\loc}$ such that $b/(1+|x|)\in L^1_t L^\infty_x$.
In this case, strong existence and pathwise uniqueness of solutions to \eqref{eq:intro-sde-detailed} is classical\footnote{Many classical textbooks, see for instance \cite[Thms. 5.2.5-5.2.9]{KarShr}, only state the result in the case of uniform-in-time bounds; it is however easy to check that the same type of Gr\"onwall estimates allow for time-dependent, $L^1$-integrable weights. In a slightly different setup, see for instance \cite[Sec. 3.2.3]{GaHaMa2022}.}; our goal is to devise a priori estimates which only rely on the norms and parameters appearing in Assumption \ref{ass:diffusion}.
Recall that we are also supplied with a random initial condition $X_0$ satisfying $\E[|X_0|]<\infty$ (corresponding to $\mu_0\in \cP_1$).

We divide our analysis in Lemmas \ref{lem:moments-X} and \ref{lem:apriori-density} below.

\begin{lemma}\label{lem:moments-X}
There exists a constant $C$, depending on $\Theta$, $\eps$, $\| b^2\|_{L^\infty_t \tilde{L}^{d+\eps}_x}$, $\| b^1/(1+|x|)\|_{L^{1+\eps}_tL^\infty_x}$ and $T$, such that
\begin{equation}\label{eq:moments-X}
	\E\big[ \| X\|_{C^{\eps/(1+\eps)}_t} \big] \leq C\big(1 + \E\big[|X_0|\big]\big).
\end{equation}
\end{lemma}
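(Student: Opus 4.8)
The plan is to assemble the pathwise bounds derived in Steps 1--3 and then upgrade them to a moment estimate using the properties of the stochastic integral $Z$. First I would recall that in Step 3 we arrived at the pathwise inequality
\begin{equation*}
	\| X(\omega)\|_{C^{\eps/(1+\eps)}_t} \lesssim 1 + |X_0(\omega)| + \|Z(\omega)\|_{C^{\eps/(1+\eps)}_t}\quad \PP\text{-a.s.},
\end{equation*}
with the hidden constant depending only on $\Theta$, $\eps$, $\| b^2\|_{L^\infty_t \tilde L^{d+\eps}_x}$, $T$ and $\| h\|_{L^{1+\eps}_t}$ (all of which are controlled by Assumption \ref{ass:diffusion}); so it suffices to take expectations and bound $\E\big[\|Z\|_{C^{\eps/(1+\eps)}_t}\big]$.

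Next I would control the stochastic integral $Z_t = \int_0^t \tilde\sigma_s(Y_s)\dd W_s$. Since $\tilde\sigma$ is uniformly bounded (by \eqref{eq:estim-new-coefficients}, $\| \tilde\sigma\|_{L^\infty_{t,x}} \leq 2\|\sigma\|_{L^\infty_{t,x}}$), the Burkholder--Davis--Gundy inequality gives, for any $m\geq 1$,
\begin{equation*}
	\E\big[|Z_t-Z_s|^{2m}\big] \lesssim_m \| \tilde\sigma\|_{L^\infty_{t,x}}^{2m} |t-s|^m,
\end{equation*}
so that Kolmogorov's continuity criterion (choosing $m$ large enough that $\eps/(1+\eps) < 1/2 - 1/(2m)$) yields $\E\big[ \|Z\|_{C^{\eps/(1+\eps)}_t}^{2m}\big] < \infty$; in particular $\E\big[\|Z\|_{C^{\eps/(1+\eps)}_t}\big] \leq C$ with $C$ depending only on $\Theta$, $\eps$ and $T$ (via $\|\sigma\|_{L^\infty_{t,x}}\leq K$, as encoded in $\Theta$). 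Combining this with the pathwise estimate and $\E[|X_0|]<\infty$ gives
\begin{equation*}
	\E\big[ \| X\|_{C^{\eps/(1+\eps)}_t}\big] \leq C\big(1 + \E[|X_0|]\big),
\end{equation*}
which is exactly \eqref{eq:moments-X}.

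I should be a little careful on two routine but non-vacuous points. First, one has to justify that all the manipulations in Steps 1--3 are legitimate for the regularized coefficients assumed at the start of Section \ref{sec:apriori} (namely $\sigma$ uniformly Lipschitz and $b\in L^1_t C^1_{\loc}$ with $b/(1+|x|)\in L^1_t L^\infty_x$): here strong existence and pathwise uniqueness of $X$ and $Y$ hold classically, $\Phi_t$ is a genuine diffeomorphism by Lemma \ref{lem:properties-zvonkin}, and the Itô formula applies to $Y_t = \Phi_t(X_t)$ since $u^b\in C^0_t C^1_x$ and, by the Schauder/parabolic theory behind Proposition \ref{prop:zvonkin}, $u^b$ is as regular as needed in $x$ for the smoothed data. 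Second, in applying Kolmogorov one wants the constants to be uniform over the class of admissible coefficients; this is automatic since the BDG and Kolmogorov constants depend only on the exponent $m$, the dimension $d$, and $\|\tilde\sigma\|_{L^\infty_{t,x}} \leq 2K^{1/2}$. I do not expect any genuine obstacle here — the only mild subtlety is bookkeeping the dependence of the final constant $C$ strictly on $\Theta$, $\eps$, $\| b^2\|_{L^\infty_t \tilde L^{d+\eps}_x}$, $\| h\|_{L^{1+\eps}_t}$ and $T$, which follows by tracing through \eqref{eq:estim-new-coefficients}--\eqref{eq:pathwise-estim-X}.
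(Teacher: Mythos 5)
Your proposal is correct and follows the same argument as the paper: combine the pathwise bound \eqref{eq:pathwise-estim-X} with Burkholder--Davis--Gundy and Kolmogorov's continuity criterion to control $\E\big[\|Z\|_{C^{\eps/(1+\eps)}_t}\big]$ uniformly, then take expectation. The extra bookkeeping you include (explicit exponent choice in Kolmogorov, tracing constants, regularity of $u^b$ for the smoothed coefficients) is sound and simply spells out what the paper leaves as ``a standard application.''
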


\begin{proof}
\emph{Step 1: Partial Zvonkin transform}.
Let $u^b$ be defined as in Section \ref{sec:preliminaries} for suitably chosen $\bar\lambda$ and set $\Phi_t(x):=x + u_t^b(x)$. By Lemma \ref{lem:properties-zvonkin}, $\Phi_t$ is a diffeomorphism from $\R^d$ to itself; moreover since $u^b$ solves \eqref{eq:zvonkin-pde} for $f=g=b^2$, by construction $\Phi$ solves the PDE
\begin{align*}
	\partial_t \Phi + \frac{1}{2}a:D^2 \Phi + b^2\cdot\nabla \Phi = \bar\lambda u, \quad \Phi_T(x)=x.
\end{align*}
Introducing the new variable $Y_t=\Phi_t(X_t)$, we deduce that $Y$ solves
\begin{align*}
\dd Y_t
& = (\partial_t \Phi + \frac{1}{2}a:D^2\Phi + b\cdot\nabla \Phi)_t(X_t) \dd t + \nabla \Phi_t (X_t) \sigma_t(X_t) \dd W_t\\
& = (\bar\lambda u + b^1\cdot\nabla \Phi)_t(X_t) \dd t + \nabla \Phi_t(X_t)\sigma_t(X_t) \dd W_t
\end{align*}
so that $Y$ solves the SDE $\dd Y = \tilde b(Y)\dd t + \tilde\sigma(Y)\dd W$ with new coefficients
\begin{align*}
	\tilde b := (\bar\lambda u + b^1\cdot\nabla \Phi)\circ \Phi^{-1},
	\quad \tilde \sigma := (\nabla \Phi_t \sigma_t)\circ \Phi^{-1}.
\end{align*}
It follows from the smallness condition $\|u\|_{C^0_t C^1_x} \leq 1/2$ and property \eqref{eq:properties-zvonkin} (applied both for $\Phi_t$ and $\Phi_t^{-1}$) that $\tilde b$ is still of linear growth, and in particular
\begin{equation}\label{eq:estim-new-coefficients}
	\Big\| \frac{\tilde b_t}{1+|x|}\Big\|_{L^\infty_x} \leq \bar\lambda  + 4\, \Big\| \frac{b^1_t}{1+|x|}\Big\|_{L^\infty_x} \quad \forall\, t\in [0,T], \quad 
	\| \tilde{\sigma}\|_{L^\infty_{t,x}} \leq 2\, \| \sigma\|_{L^\infty_{t,x}}.
\end{equation}
Let us set $h_t:= \bar\lambda  + 4\, \| b^1_t/(1+|x|)\|_{L^\infty_x}$; by Assumption \ref{ass:diffusion}, it holds $h\in L^{1+\eps}_t$.

\emph{Step 2: A priori estimates for $Y$.}
Set $Z_t:= \int_0^t \tilde\sigma_s(Y_s) \dd W_s$, so that $Y$ satisfies
\begin{align*}
	Y_t = Y_0 + \int_0^t \tilde{b}_s(Y_s) \dd s + Z_t;
\end{align*}
since $\tilde{b}_s(x)\leq h_s(1+|x|)$, we can apply Gr\"onwall's lemma at a pathwise level to find
\begin{equation}\label{eq:pathwise-gronwall}
	\| Y(\omega)\|_{C^0_t} \leq e^{\| h \|_{L^1_t}} \Big( \| h \|_{L^1_t} + |Y_0(\omega)| + \sup_{t\in [0,T]} |Z_t(\omega)| \Big) \quad \PP\text{-a.s.}
\end{equation}
Furthermore by the properties of $\tilde{b}$ and H\"older's inequality, it holds
\begin{align*}
	|Y_t-Y_s|
	& \leq (1+\| Y\|_{C^0_t}) \int_s^t h_r \dd r + |Z_t-Z_s|\\
	& \lesssim |t-s|^{\frac{\eps}{1+\eps}} \Big( \| h\|_{L^{1+\eps}_t} + \| h\|_{L^{1+\eps}_t}\, \| Y\|_{C^0_t} + \llbracket Z\rrbracket_{C^{\eps/(1+\eps)}_t} \Big) \quad \PP\text{-a.s.};
\end{align*}
dividing by $|t-s|^{\eps/(1+\eps)}$, taking supremum and combining this with \eqref{eq:pathwise-gronwall}, one arrives at
\begin{equation}\label{eq:pathwise-holder-estim}
	\| Y(\omega) \|_{C^{\eps/(1+\eps)}_t} \lesssim e^{2 \| h \|_{L^{1+\eps}_t} } \Big( 1 + |Y_0(\omega)| + \|Z(\omega)\|_{C^{\eps/(1+\eps)}_t} \Big)\quad \PP\text{-a.s.}
\end{equation}

\emph{Step 3: A priori estimates for $X$.}
Recall that $X_t=\Phi_t^{-1}(Y_t)$, where by construction $\Phi^{-1}$ satisfies \eqref{eq:properties-zvonkin}; moreover it holds $|\Phi_t^{-1}(x)|\leq |x| + 1/2$ uniformly in $t$, since
\begin{align*}
	|x|=|\Phi_t(\Phi_t^{-1}(x))| = |\Phi_t^{-1}(x) + u_t(\Phi_t^{-1}(x))|
	\geq |\Phi_t^{-1}(x)| - \| u_t\|_{C^0_x} \geq |\Phi_t^{-1}(x)| - \frac{1}{2}.
\end{align*}
It follows that $\PP$-a.s. $\sup_t |X_t| \leq 1 + \sup_t |Y_t|$ and
\begin{align*}
	|X_t-X_s|
	\leq |\Phi_t^{-1} (Y_t)-\Phi_t^{-1}(Y_s)| + |\Phi_t^{-1} (Y_s)-\Phi_s^{-1}(Y_s)|
	\lesssim |Y_t-Y_s| + |t-s|^{1/2};
\end{align*}
combined with the pathwise bounds \eqref{eq:pathwise-gronwall}-\eqref{eq:pathwise-holder-estim}, we finally obtain an estimate of the form
\begin{equation}\label{eq:pathwise-estim-X}
	\| X(\omega)\|_{C^{\eps/(1+\eps)}_t} \lesssim 1 + |X_0(\omega)| + \|Z(\omega)\|_{C^{\eps/(1+\eps)}_t}\quad \PP\text{-a.s.}
\end{equation}
where the hidden constant depends on $\Theta$, $\eps$, $\| b^2\|_{L^\infty_t \tilde{L}^{d+\eps}_x}$, $T$ and $\| h\|_{L^{1+\eps}_t}$.

Recall that $Z$ is defined as a stochastic integral, with uniformly bounded $\tilde{\sigma}$; a standard application of Burkholder-Davis-Gundy inequality and Kolmogorov's continuity theorem allows to deduce that $\|Z\|_{C^{\eps/(1+\eps)}_t}$ admits moments of any order, in particular it has finite expectation.
In view of the assumptions on $X_0$,
this concludes the proof.
\end{proof}

As a next step, we derive a priori estimates on the density of $\cL(X_t)$.

\begin{lemma}\label{lem:apriori-density}
Let $b$, $\sigma$ be regular coefficients satisfying Assumption \ref{ass:diffusion}, $X$ the solution to \eqref{eq:intro-sde-detailed} and set $\mu_t=\cL(X_t)$. Then for any pair $(\tilde p, \tilde q)$ satisfying \eqref{eq:integrability-density} it holds
\begin{equation*}
	\| \mu\|_{L^{\tilde q}_t L^{\tilde p}_x} \lesssim 1+ \E[|X_0|]
\end{equation*}
where the hidden constant depends on $\Theta$, $\eps$, $\tilde p$, $\tilde q$, $\| b^1/(1+|x|)\|_{L^1_t L^\infty_x}$ and $\| b^2\|_{L^\infty_t \tilde{L}^{d+\eps}_x}$.
\end{lemma}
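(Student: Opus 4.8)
The plan is to estimate $\mu$ by duality and to convert the resulting space–time averages of $X$ into bounds coming from the parabolic estimate of Proposition~\ref{prop:zvonkin} — applied with the singular component $b^2$ playing the role of the first–order coefficient — together with the moment bound of Lemma~\ref{lem:moments-X}. Since $(\tilde p,\tilde q)\in(1,\infty)^2$, the mixed space $L^{\tilde q}_t L^{\tilde p}_x$ is reflexive with dual $L^{\tilde q'}_t L^{\tilde p'}_x$ (conjugate exponents), so it suffices to bound
\[
\int_0^T\!\!\int_{\R^d} f_t\,\mu_t(\dd x)\,\dd t = \E\Big[\int_0^T f_t(X_t)\,\dd t\Big]
\]
uniformly over nonnegative $f$ with $\|f\|_{L^{\tilde q'}_t L^{\tilde p'}_x}\le 1$; once this supremum is finite, a routine duality argument shows $\mu$ has a density in $L^{\tilde q}_t L^{\tilde p}_x$ with that norm. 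A direct computation shows that \eqref{eq:integrability-density} is equivalent to $1/\tilde q'+d/\tilde p'<1$, so the conjugate exponents fall exactly in the regime of Lemma~\ref{lem:decomposition-lemma}; applying it I would write $f=f^\le+f^>$ with $f^\le\in L^{1+\eta}_t L^\infty_x$, $f^>\in L^\infty_t L^{d+\eta}_x$, both of norm $\le 1$, for some $\eta=\eta(\tilde p,\tilde q)>0$. The contribution of $f^\le$ is immediate: $\E[\int_0^T f^\le_t(X_t)\,\dd t]\le\int_0^T\|f^\le_t\|_{L^\infty_x}\,\dd t\le T^{\eta/(1+\eta)}$.

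The core of the argument is the $f^>$ term. Setting $\eps'':=\min(\eps,\eta)$, both $b^2$ and $f^>$ lie in $L^\infty_t\tilde L^{d+\eps''}_x$ (using $L^{d+\eta}_x\hookrightarrow\tilde L^{d+\eta}_x$ and the nesting $\tilde L^{p_1}_x\hookrightarrow\tilde L^{p_2}_x$ for $p_1\ge p_2$, valid since the cutoffs $\chi^z$ are supported in balls of fixed radius). Proposition~\ref{prop:zvonkin}, applied with $g=b^2$, source $f^>$ and $\bar\lambda'$ large enough — depending only on $\Theta$, $\eps$ and $\|b^2\|_{L^\infty_t\tilde L^{d+\eps}_x}$ — then produces a strong solution $w$ of
\[
\partial_t w + \tfrac12 a:D^2 w + b^2\cdot\nabla w - \bar\lambda' w = -f^>,\qquad w\vert_{t=T}=0,
\]
with $\|w\|_{C^0_t C^1_x}\lesssim\|f^>\|_{L^\infty_t\tilde L^{d+\eps''}_x}\le 1$. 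Applying the It\^o--Krylov formula to $t\mapsto w_t(X_t)$ (legitimate since $\sigma$ is nondegenerate and $w$ is a strong parabolic solution), inserting the equation, using $w_T\equiv0$, and taking expectations — the stochastic integral being a true martingale because $\nabla w$ and $\sigma$ are bounded — I would arrive at
\[
\E\Big[\int_0^T f^>_t(X_t)\,\dd t\Big] = \E\big[w_0(X_0)\big] + \E\Big[\int_0^T\big(\bar\lambda' w + b^1\cdot\nabla w\big)_t(X_t)\,\dd t\Big].
\]

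The first two terms on the right are bounded by $(1+\bar\lambda' T)\,\|w\|_{C^0_t C^0_x}\lesssim 1$. For the last one I would use the linear growth $|b^1_t(x)|\le(1+|x|)\,\|b^1_t/(1+|x|)\|_{L^\infty_x}$, the bound $\|\nabla w\|_{C^0_t C^0_x}\lesssim 1$, and Lemma~\ref{lem:moments-X}, to get
\[
\E\Big[\int_0^T |b^1_t(X_t)|\,|\nabla w_t(X_t)|\,\dd t\Big] \lesssim \Big\|\tfrac{b^1}{1+|\cdot|}\Big\|_{L^1_t L^\infty_x}\big(1+\E[\|X\|_{C^0_t}]\big) \lesssim 1+\E[|X_0|].
\]
Adding this to the $f^\le$ bound and taking the supremum over admissible $f$ gives $\|\mu\|_{L^{\tilde q}_t L^{\tilde p}_x}\lesssim 1+\E[|X_0|]$, with a constant depending only on $\Theta$, $\eps$, $\tilde p$, $\tilde q$ and the two norms in \eqref{eq:ass-drift}.

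I do not expect a single hard estimate; the delicate points are of a bookkeeping nature. One is reconciling the integrability exponent $\eta$ produced by Lemma~\ref{lem:decomposition-lemma} with the one required by Proposition~\ref{prop:zvonkin}, handled through the nesting of uniformly local Lebesgue spaces above. Another is keeping every constant independent of the regularisation of $(b,\sigma)$, so that the bound genuinely depends only on the data of Assumption~\ref{ass:diffusion}; this is precisely the reason $b^1$ is kept as a perturbative term rather than absorbed into the PDE. Finally, the It\^o--Krylov step, though standard for nondegenerate diffusions tested against merely strong (Sobolev) parabolic solutions, deserves a line of justification. An alternative to the PDE argument would be to apply a Krylov-type estimate directly to the Zvonkin-transformed process $Y$ of Section~\ref{sec:apriori} (whose drift has only linear growth) after localising to large balls, but tracking the dependence of the constants on the localisation radius seems less transparent.
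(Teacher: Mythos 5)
Your proposal is correct and follows essentially the same route as the paper: duality to reduce to bounding $\E[\int_0^T f_t(X_t)\,\dd t]$, the decomposition Lemma~\ref{lem:decomposition-lemma} for the conjugate exponents, Proposition~\ref{prop:zvonkin} with $g=b^2$ and source $f^>$, and It\^o's formula plus Lemma~\ref{lem:moments-X}. In fact your write-up is a touch more careful than the paper's on two small points that the paper glosses over: you explicitly reconcile the two a priori different integrability gains (the $\eps$ of Assumption~\ref{ass:diffusion} versus the $\eta$ produced by Lemma~\ref{lem:decomposition-lemma}) via $\eps''=\min(\eps,\eta)$ and the nesting of $\tilde L^p_x$ spaces, and you retain the $\bar\lambda' w$ term in the identity obtained from It\^o's formula, which the paper's displayed equation omits (along with a $\nabla g$ that should read $\nabla u$).
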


\begin{proof}
Let $(\tilde p, \tilde q)$ be fixed and denote by $(\tilde p', \tilde q')$ their conjugate exponents. By the duality relation $(L^{\tilde q}_t L^{\tilde p}_x)^\ast = L^{\tilde q'}_t L^{\tilde p'}_x$, in order to prove the claim it suffices to show that
\begin{equation}\label{eq:density-goal}
	|\langle f,\mu\rangle|
	= \bigg| \int_0^T \int_{\R^d} f_s(x) \mu_s(\!\dd x) \dd s\,\bigg|
	= \bigg| \int_0^T \E[f_s(X_s)] \dd s\,\bigg|
	\lesssim \| f\|_{L^{\tilde q'}_t L^{\tilde p'}_x} (1 + \E[|X_0|])
\end{equation}
for all $f\in L^{\tilde q'}_t L^{\tilde p'}_x$; by linearity, we may assume $\| f\|_{L^{\tilde q'}_t L^{\tilde p'}_x}=1$.
Observe that $(\tilde q,\tilde p)$ satisfy \eqref{eq:integrability-density} if and only if their duals satisfy $1/\tilde q' + d/\tilde p'<1$; we can therefore invoke Lemma \ref{lem:decomposition-lemma} to decompose $f=f^{\leq} + f^>$ with $\| f^\leq \|_{L^{1+\eps}_t L^\infty_x}, \| f^>\|_{L^\infty_t L^{d+\eps}_x} \leq 1$. The first term is easy to estimate, since
\begin{equation}\label{eq:density-estim-easy}
	\bigg| \int_0^T \E[f^{\leq}_t(X_t)] \dd t\bigg|
	\leq \int_0^T \| f^{\leq}_t\|_{L^\infty_x} \dd t
	\lesssim_T \| f^\leq\|_{L^{1+\eps}_t L^\infty_x}.
\end{equation}
For the second one, fix any value $\lambda>0$ large enough such that Proposition \ref{prop:zvonkin} applies for $g=b^2$ and $f^>$ in place of $f$; let $u$ denote the associated scalar-valued solution to \eqref{eq:zvonkin-pde}, which thus satisfies 	\eqref{eq:zvonkin-estim}. 
Applying It\^o's formula on $[0,T]$, we find
\begin{align*}
	u_T(X_T)-u_0(X_0)
	= \int_0^T (\partial_t u + \frac{1}{2} a:D^2 u + b^2\cdot\nabla u)(X_t)\dd t + \int_0^T (b^1\cdot\nabla u)(X_t) \dd t + M_T
\end{align*}
for a suitable martingale $M$. Rearranging the terms, applying $u_T\equiv 0$ and taking expectation, we get
\begin{align*}
	 \int_0^T \E[ f^>_t(X_t)] \dd t = \E[u_0(X_0)] + \int_0^T \E[(b^1\cdot\nabla u)(X_t) + \lambda u(X_t)] \dd t;
\end{align*}
applying assumption \eqref{eq:ass-drift} for $b^1$, we then find
\begin{equation}\label{eq:density-estim-hard}\begin{split}
	\bigg| \int_0^T \E [f^>_t(X_t)] \dd t \bigg| 
	& \lesssim_T \| u_0\|_{L^\infty_x} + \| \nabla u\|_{L^\infty_{t,x}} \Big\| \frac{b^1}{1+|x|}\Big\|_{L^1_t L^\infty_x} \Big(1+\E[\|X\|_{C^0_t}]\Big) + \lambda \| u\|_{L^\infty_{t,x}}\\
	& \lesssim 1 + \E[ |X_0|] <\infty
\end{split}\end{equation}
where in the last step we applied Lemma \ref{lem:moments-X}.
Combining \eqref{eq:density-estim-easy} and \eqref{eq:density-estim-hard} yields \eqref{eq:density-goal} and thus the conclusion.
\end{proof}

\section{Proof of the main results}\label{sec:main-thm}

\begin{proof}[Proof of Theorem \ref{thm:main-theorem}]
The proof is based on classical approximation and compactness arguments.
Let $(b,\sigma)$ satisfying Assumption \ref{ass:diffusion} and $\mu_0\in \cP_1$ be given.
By mollifying $b^1$, $b^2$ and $\sigma$, we can construct an approximating sequence $(b^{1,n},b^{2,n},\sigma^n)$ satisfying Assumption \ref{ass:diffusion} uniformly in $n$; more precisely, we require that
\begin{align*}
	\Big\| \frac{b^{1,n}_t}{1+|x|}\Big\|_{L^\infty_x} \leq h_t, \quad \sup_n \| b^{2,n}\|_{L^\infty_t \tilde{L}^{d+\eps}_x}<\infty,
\end{align*}
where the function $h\in L^{1+\eps}_t$ is independent of $n$, while $\sigma^n$ satisfy conditions \eqref{eq:ass-diffusion-1}-\eqref{eq:ass-diffusion-2} for a constant $K$ and a modulus of continuity $\omega_\sigma$ independent of $n$.
Furthermore, the sequence can be constructed so that
\begin{equation}\label{eq:properties-approx}
\lim_{n\to\infty} \sup_{t,x} |\sigma^n(t,x)-\sigma(t,x)|=0, \quad b^{1,n}\to b^1 \text{ in } L^{1+\eps}_t L^p_{loc}, \quad b^{2,n}\to b^2 \text{ in } L^q_t L^{d+\eps}_{loc}
\end{equation}
for all $p,q<\infty$. Finally, for fixed $n$ the coefficients $(b^{1,n},b^{2,n},\sigma^n)$ are regular, in the sense that $b^n\in L^1_t C^1_{loc}$ and satisfying linear growth conditions, while $\sigma^n\in L^\infty_t C^1_x$.

Consider a filtered probability space $(\Omega,\mathcal{F},\{\cF_t\}_{t\geq 0},\PP)$, endowed with some random variables $(\xi,W)$ such that $\cL(\xi)=\mu_0$, $\xi$ is $\cF_0$-measurable and $W$ is a $\cF_t$-Brownian motion.
For any $n$, we can construct classically a strong solution to the SDE
\begin{equation*}
	\dd X^n = b^n_t(X^n_t) \dd t + \sigma^{n}_t(X^n_t) \dd W_t, \quad X^n\vert_{t=0}=\xi.
\end{equation*}
Since $(b^n,\sigma^n)$ satisfy Assumption \ref{ass:diffusion}, all the results from Section \ref{sec:apriori} apply; in particular, setting $\mu^n_t= \cL(X^n_t)$, by Lemmas \ref{lem:moments-X}-\ref{lem:apriori-density} it holds
\begin{equation}\label{eq:uniform-bounds}
	\sup_n \E\big[ \| X^n\|_{C^{\eps/(1+\eps)}_t} \big]<\infty, \quad \sup_n \| \mu^n\|_{L^{\tilde q}_t L^{\tilde p}_x} <\infty\ \quad \forall\, (\tilde q, \tilde p) \text{ satisfying \eqref{eq:integrability-density}}.
\end{equation}
Furthermore, by \eqref{eq:pathwise-estim-X} we have the $\PP$-a.s. bounds
\begin{equation}\label{eq:prelim-bound}
	\| X^n(\omega)\|_{C^0_t} \lesssim 1 + |\xi(\omega)| + \|Z^n(\omega)\|_{C^{\eps/(1+\eps)}_t}
\end{equation}
with constant independent of $n$ and $Z^n=\int_0^\cdot \tilde\sigma^n_t(Y^n_t) \dd W_t$.
By construction, $\tilde{\sigma}^n$ are uniformly bounded, thus the family of r.v.s $\{\|Z^n(\omega)\|_{C^{\eps/(1+\eps)}_t}\}_n$ admits uniformly bounded second moment, making it uniformly integrable.
As the same holds for the single r.v. $|\xi|$, we deduce uniform integrability of the r.v.s appearing on the l.h.s. of \eqref{eq:prelim-bound}, namely
\begin{equation}\label{eq:uniform-integrability}
	\lim_{R\to\infty} \sup_n \E\Big[ \| X^n\|_{C^0_t} \mathbbm{1}_{\| X^n\|_{C^0_t}>R} \Big] = 0.
\end{equation}
The first estimate in \eqref{eq:uniform-bounds}, together with Ascoli-Arzelà's theorem, immediately implies tightness of $\{\cL(X^n)\}_n$ in $C^0_t$, thus also tightness of $\{\cL(\xi,X^n,W)\}_n$ in $\R^d\times C^0_t\times C^0_t$.
By an application of Prokhorov's theorem, we can extract a (not relabelled) subsequence such that $\{\cL(\xi,X^n,W)\}_n$ converge in law;
by Skorokhod's theorem, we can then construct a new probability space $(\tilde \Omega, \tilde \cF,\tilde \PP)$ and a sequence of random variables $(\tilde \xi^n, \tilde X^n, \tilde W^n)$ defined on it such that $\cL(\xi, X^n, W)= \cL(\xi^n,\tilde X^n,\tilde W^n)$ and $(\tilde \xi^n, \tilde X^n,\tilde W^n)\to (\tilde \xi,\tilde X,\tilde W)$ $\tilde\PP$-a.s. in $\R^d\times C^0_t\times C^0_t$.

Standard arguments show that $W$ is a Brownian motion w.r.t. the common filtration $\cG_t=\sigma(\tilde \xi, \tilde X_r,\tilde W_r: r\leq t)$ and that $\cL(\xi)=\mu_0$; additionally observe that, since $\cL(X^n)$ converge weakly to $\cL(X)$ and satisfy the uniform bounds \eqref{eq:uniform-bounds}, by lower semicontinuity of $L^{\tilde q}_t L^{\tilde p}_x$-norms, setting $\mu_t=\cL(X_t)=\cL(\tilde X_t)$, it holds
\begin{equation}\label{eq:integrability_limit}
	\mu\in L^{\tilde q}_t L^{\tilde p}_x, \quad \| \mu\|_{L^{\tilde q}_t L^{\tilde p}_x} \leq \liminf_{n\to\infty} \| \mu^n\|_{L^{\tilde q}_t L^{\tilde p}_x}<\infty.
\end{equation}

It remains to show that $(\tilde\xi, \tilde X, \tilde W)$ is the desired weak solution to the SDE \eqref{eq:intro-sde-detailed}.
In order to do so, it suffices to show that we can pass to the limit in each term in the approximations, namely that
\begin{equation}\label{eq:main-proof-goal}
	\int_0^\cdot b^{i,n}_t(\tilde X^n_t) \dd t \to \int_0^\cdot b^i_t(\tilde X_t) \dd t,
	\quad \int_0^\cdot \sigma^n_t(\tilde X^n_t) \dd \tilde W^n_t \to \int_0^\cdot \sigma_t(\tilde X_t) \dd \tilde W_t
\end{equation}
in probability in $C^0_t$, for $i=1,2$.

We first consider the stochastic integrals in \eqref{eq:main-proof-goal}, which are the easiest.
By construction $\sigma^n\to \sigma$ uniformly in $(t,x)$ and $\tilde X^n\to \tilde X$ $\tilde \PP$-a.s. in $C^0_t$, so that $\sigma^n(\tilde X^n)\to \sigma(\tilde X)$ as well; on the other hand $\tilde W^n\to \tilde W$ in $C^0_t$, and so by applying \cite[Lemma 2.1]{DGHT2011}, we conclude that $\int_0^\cdot \sigma^n(\tilde X^n) \dd \tilde W^n \to \int_0^\cdot \sigma (\tilde X) \dd \tilde W$ in probability.

We claim that, for $i=1,2$, it holds
\begin{equation}\label{eq:convergence-goal}
	\lim_{n\to\infty} \tilde \E\Big[ \int_0^T |b^{i,n}_t(\tilde X^n_t) - b^i_t(\tilde X_t)| \dd t \Big] = 0
\end{equation}
from which \eqref{eq:main-proof-goal} will follow. We only give the proof for \eqref{eq:convergence-goal} for $i=1$, the other case being similar.
In order to prove \eqref{eq:convergence-goal}, we will actually show that, for any given $\delta>0$, it holds
\begin{equation}\label{eq:convergence-goal2}
	\lim_{n\to\infty} \tilde \E\Big[ \int_0^T |b^{1,n}_t(\tilde X^n_t) - b^1_t(\tilde X_t)| \dd t \Big] \leq \delta.
\end{equation}
We divide our analysis in a few substeps.

\emph{Step 1.} We introduce a cutoff function $\psi_R(x):=\psi(|x|/R)$, where $\psi$ is a smooth function satisfying $\psi\equiv 1$ on $[0,1]$ and $\psi\equiv 0$ on $[2,\infty)$ and $R$ is a parameter to be chosen. Correspondingly, we decompose the integral in \eqref{eq:convergence-goal2} as
\begin{equation}\label{eq:convergence-proof1}\begin{split}
	\int_0^T & |b^{1,n}_t(\tilde X^n_t) - b^1_t(\tilde X_t)| \dd t\\
	& \leq \int_0^T \Big( |b^{1,n}_t(1-\psi_R)|(\tilde X^n_t) + |b^1_t(\psi_R-1)|(\tilde X_t) + |(b^{1,n}_t-b^1_t)\psi_R|(\tilde X^n_t)\Big) \dd t\\
	& \quad + \int_0^T [b^1_t \psi_R(\tilde X^n_t) - b^1_t \psi_R(\tilde X_t)]\dd t =: I^{1,n} + I^2 + I^{3,n} + I^{4,n}.
\end{split}\end{equation}
We estimate these terms separately.

\emph{Step 2.} Recall that $\{X^n\}_n$ satisfy the uniform integrability \eqref{eq:uniform-integrability}, so that the same holds for $\tilde{X}^n$ (as well as $\tilde X$). Therefore we can estimate $\tilde \E[I^{1,n}]$ by
\begin{align*}
	\tilde\E\big[I^{1,n} \big]
	& \leq \tilde\E\Big[ \int_0^T |b^{1,n}_t(\tilde X^n_t)| \mathbbm{1}_{|\tilde X^n_t|\geq R} \dd t \Big]\\
	& \leq \tilde\E\Big[ \int_0^T h_t (1+ |\tilde X^n_t|) \mathbbm{1}_{\|\tilde X^n\|_{C^0_t} \geq R} \dd t\Big]
	\leq \| h\|_{L^1_t} \tilde\E\Big[ (1 + \| \tilde X^n\|_{C^0_t}) \mathbbm{1}_{\|\tilde X^n\|_{C^0_t} \geq R} \Big]
\end{align*}
where the last term goes to $0$ as $R\to\infty$, uniformly in $n$, by virtue of \eqref{eq:uniform-integrability}. The same argument works for $I^2$ as well. In particular, we can choose $R$ independent of $n$ such that $\tilde \E[I^{1,n}+I^2]\leq \delta/2.$

\emph{Step 3.} From now on we work with $R$ fixed, determined by Step 2 above.
Let us fix some $p\in [1,\infty)$ large enough such that $1/(1+\eps)+d/p<1$; by contruction of the approximations, it holds $b^{1,n} \psi_R \to b^1 \psi_R$ in $L^{1+\eps}_t L^p_x$; on the other hand, by \eqref{eq:uniform-bounds} the measures $\mu^n$ are uniformly bounded in $L^{(1+\eps)/\eps}_t L^{p'}_x$. It follows that
\begin{align*}
	\lim_{n\to\infty} \tilde\E[I^{3,n}]
	& = \lim_{n\to\infty} \int_0^T \int_{\R^d} |(b^{1,n}_t-b^1_t)\psi_R|(x) \mu^n_t(\! \dd x) \dd t\\
	& \leq \lim_{n\to\infty} \| (b^{1,n}-b^1) \psi_R\|_{L^{1+\eps}_t L^p_x} \| \mu^n\|_{L^{(1+\eps)/\eps}_t L^{p'}_x} = 0.
\end{align*}

\emph{Step 4.} 
It remains to study $I^{4,n}$. Observe that, if $b^1$ were continuous, then $\tilde \E[I^{4,n}]\to 0$ would follow from the property that $\tilde X^n\to \tilde X$ in $C^0_t$ and dominated convergence; if it isn't, we just need to introduce another approximation procedure. To this end, for any another continuous function $\tilde b$, by addition and subtraction we have
\begin{align*}
	I^{4,n} & \leq \int_0^T |(\tilde b_t \psi_R)(\tilde X^n_t) -  (\tilde b_t \psi_R)(\tilde X_t)| \dd t\\
	& \quad + \int_0^T |(b^1_t-\tilde b_t)\psi_R| (\tilde X^n_t) \dd t + \int_0^T |(b^1_t-\tilde b_t)\psi_R| (\tilde X_t) \dd t\\
	& =: J^{1,n} + J^{2,n} + J^{3}.
\end{align*}
For $J^{1,n}$, the previous argument is now rigorous, so that $\tilde\E[J^{1,n}]\to 0$ as $n\to\infty$.
For $J^{2,n}$ and $J^{3}$, fixing $p$ large enough s.t. $1/(1+\eps) + d/p<1$, we may argue as in Step 3 to find
\begin{equation}\label{eq:convergence-proof2}
	\tilde\E[J^{2,n} + J^3]
	\lesssim \| (\tilde b-b^1) \psi_R\|_{L^{1+\eps}_t L^p_x} \Big(\| \mu^n\|_{L^{(1+\eps)/\eps}_t L^{p'}_x} + \| \mu\|_{L^{(1+\eps)/\eps}_t L^{p'}_x}\Big)
	\lesssim \| (\tilde b-b^1) \psi_R\|_{L^{1+\eps}_t L^p_x}
\end{equation}
where in the last passage we used \eqref{eq:uniform-bounds} and \eqref{eq:integrability_limit}.
Since $b^1 \psi_R\in L^{1+\eps}_t L^p_x$ and continuous, compactly supported functions are dense therein, we can choose $\tilde b$ so that the r.h.s. of \eqref{eq:convergence-proof2} is arbitrarily small, in particular smaller than $\delta/2$.

Combining Steps 1-4 above overall yields \eqref{eq:convergence-goal2}, which concludes the proof.
\end{proof}

\begin{proof}[Proof of Corollary \ref{cor:main-cor-1}]
It follows immediately from Lemma \ref{lem:decomposition-lemma}.
\end{proof}

\begin{proof}[Proof of Corollary \ref{cor:main-cor-2}]
Consider the approximations $(b^n, \sigma^n,X^n)$ 
constructed in the proof of Theorem \ref{thm:main-theorem}.
Clearly $\mu^n_t=\cL(X^n_t)$ are now solutions to \eqref{eq:FP} with $(b,a)$ replaced by $(b^n,a^n)$, where $a^n=\sigma^n (\sigma^n)^\ast$, and $\mu^n_t$ converge weakly to $\mu_t=\cL(X_t)$.
The continuity of $t\mapsto \mu_t$ in the weak convergence of measures is a direct consequence of the fact that $X$ has continuous paths.
The fact that $\mu\in L^{\tilde q}_t L^{\tilde p}_x$ was shown in \eqref{eq:integrability_limit}; the claim that $b\mu$, $a\mu\in L^1_t L^1_\loc$ is then a consequence of H\"older's inequality.

It remains to show that \eqref{eq:FP-weak} holds, which can be obtained by passing to the limit in the approximations, namely showing that for any $\varphi\in C^\infty_c$ it holds
\begin{equation}\label{eq:claim_final_cor}\begin{split}
	\lim_{n\to\infty} \int_0^T &\int_{\R^d} \Big( \partial_t \varphi + b^n_t\cdot\nabla \varphi + \frac{1}{2}\sum_{i,j} a^n_{ij} \partial^2_{ij} \varphi\Big)(x) \mu^n_t(\!\dd x) \dd t\\
	&= \int_0^T \int_{\R^d} \Big( \partial_t \varphi + b_t\cdot\nabla \varphi + \frac{1}{2}\sum_{i,j} a_{ij} \partial^2_{ij} \varphi\Big)(x) \mu_t(\!\dd x) \dd t.
\end{split}\end{equation}
Noting that
\begin{align*}
	\int_0^T \int_{\R^d} & \Big( \partial_t \varphi + b^n_t\cdot\nabla \varphi + \frac{1}{2}\sum_{i,j} a^n_{ij} \partial^2_{ij} \varphi\Big)(x) \mu^n_t(\!\dd x) \dd t\\
	& = \tilde\E \Big[\int_0^T \Big( \partial_t \varphi + b^n_t\cdot\nabla \varphi + \frac{1}{2}\sum_{i,j} a^n_{ij} \partial^2_{ij} \varphi\Big)(\tilde X^n_t) \dd t \Big]
\end{align*}
and that a similar relation holds for $\tilde X$, claim \eqref{eq:claim_final_cor} now follows from the same arguments used in the proof of Theorem \ref{thm:main-theorem}.
\end{proof}

\section*{Acknowledgements}
This work originates from some stimulating discussions with Oleg Butkovsky, to whom I'm very thankful, who pointed out the existence of the paper \cite{krylov1} and wondered about the interpretation to give to the condition $1/q+d/p=1$, also in relation to the work \cite{ButGal}.
I also thank the referee for the careful reading of the manuscript and many insightful comments, which improved its quality.

\section*{Funding information}

The author is supported by the SNSF Grant 182565 and by the Swiss State Secretariat for Education, Research and Innovation (SERI) under contract number MB22.00034 through the project TENSE.

\bibliography{myBiblio}{}
\bibliographystyle{plain}

\end{document}